\newtheorem{thm}{Theorem}[section]
\newtheorem{cor}[thm]{Corollary}
\newtheorem{lem}[thm]{Lemma}
\newtheorem{prop}[thm]{Proposition}
\theoremstyle{definition}
\newtheorem{defin}[thm]{Definition}
\newtheorem{example}[thm]{Example}
\theoremstyle{remark}
\newtheorem*{remark}{Remark}
\numberwithin{equation}{section}
\newcommand{\lin}{\operatorname{span}}
\newcommand{\supp}{\operatorname{supp}}
\newcommand{\dist}{\operatorname{dist}}
\newcommand{\dif}{\,\mathrm{d}}
\newcommand{\charfun}{\ensuremath{\mathbbm 1}}
\DeclareMathOperator{\card}{card}
\DeclareMathOperator{\conv}{re}
\newcommand{\BS}{F}
\begin{document}
\title{Multivariate orthogonal spline systems}

\author[M. Passenbrunner]{Markus Passenbrunner}
\address{Institute of Analysis, Johannes Kepler University Linz, Austria, 4040 Linz, Alten\-berger Strasse 69}
\email{markus.passenbrunner@jku.at}

\keywords{
	Tensor product spline orthoprojectors, almost
	everywhere convergence, unconditional convergence}
	\subjclass[2010]{
	41A15, 42C10, 46E30,
	}

\begin{abstract}
	In this article we consider orthonormal systems consisting
	of tensor products of splines.
We show some convergence results of the corresponding orthogonal series including a.e. convergence and
unconditional convergence in $L^p$ for $1<p<\infty$, where the latter is proved
under some geometric conditions on the involved partitions that depend on the
spline order.
\end{abstract}

\maketitle 

\allowdisplaybreaks
\section{Introduction}
In this article we prove convergence results of orthogonal
series of certain tensor products of splines in the spirit of
the known results for martingales.
We begin by discussing the
situation for martingales and, subsequently, for univariate splines. 
For martingales, we use \cite{Neveu1975} and \cite{Pisier2016}  as
references.
Let $(\Omega, (\mathscr F_n), \mathbb P)$ be a filtered probability space. A
sequence of integrable functions $(X_n)_{n\geq 1}$ is a \emph{martingale} if $\mathbb
E(X_{n+1} | \mathscr F_n) = X_n$ for every $n$, where we denote by $\mathbb
E(\cdot | \mathscr F_n)$ the conditional expectation operator with respect to
the $\sigma$-algebra $\mathscr F_n$. This operator is the orthoprojector onto
the space of $\mathscr F_n$-measurable $L^2$-functions and it 
can be extended to  $L^1$. 
Observe that if $X\in L^1$, the sequence  $(\mathbb E(X|\mathscr F_n))$
is a martingale.
In this case, we have that $\mathbb E(X|\mathscr F_n)$ converges
almost surely to $\mathbb E(X|\mathscr F)$ with $\mathscr F = \sigma(
\cup_n\mathscr F_n)$.
For general scalar-valued martingales, we have the following convergence
theorem: 
 any martingale $(X_n)$ that is bounded in $L^1$ has
an almost sure limit function contained in $L^1$. 
Additionally we know that martingale differences $dX_n = X_n - X_{n-1}$ converge unconditionally in
$L^p$ for $1<p<\infty$, i.e., we have the inequality
\begin{equation}\label{eq:strong_p_mart}
	\Big\| \sum_n \varepsilon_n dX_n \Big\|_p \leq C_p \Big\| \sum_n dX_n
	\Big\|_p
\end{equation}
for all sequences of signs $(\varepsilon_n)$ and some
constant $C_p$ depending only on $p$.
More precisely, we have the following inequality of weak type:
\begin{equation}\label{eq:wt_mart}
	\sup_{\lambda >0} \lambda \cdot \mathbb P\Big( \sup_{n} \Big| \sum_{\ell\leq n}
	\varepsilon_\ell dX_\ell\Big| > \lambda\Big) \leq C \sup_{n} \| X_n \|_1,
\end{equation}
where $C$ some absolute constant.
Equation \eqref{eq:strong_p_mart} is a consequence of \eqref{eq:wt_mart} since by 
orthogonality of martingale differences $dX_n$ we have
\eqref{eq:strong_p_mart} for $p=2$ and the Marcinkiewicz interpolation theorem
then implies \eqref{eq:strong_p_mart} for every $p$ in the range $1<p<\infty$.

Consider now the special case where each $\sigma$-algebra $\mathscr F_n$ is generated by
a partition of a bounded interval $I\subset \mathbb R$ into finitely many 
intervals $(I_{n,i})_i$ of positive length as atoms of $\mathscr F_n$. In this case, $(\mathscr F_n)$ is
called an \emph{interval filtration} on $I$.
Then, the
characteristic functions $(\charfun_{I_{n,i}})$ of those atoms are a sharply localized
orthogonal basis of $L^2(\mathscr F_n)$ with respect to Lebesgue measure
$|\cdot|$. 
If we want to preserve the localization property of the basis functions, but 
at the same time consider spaces of functions with higher smoothness, a natural
candidate are spaces of piecewise polynomial functions of order $k$, given by
\begin{equation}\label{eq:defSk}
\begin{aligned}
	S_{k}(\mathscr F_n) = \{f : I\to \mathbb R\ |\ &\text{$f$ is $k-2$ times continuously
	differentiable and} \\
	&\qquad\text{a polynomial of order $k$ on
	each atom of $\mathscr F_n$} \},
\end{aligned}
\end{equation}
where $k$ is an arbitrary positive integer.
One reason for this is that $S_k(\mathscr F_n)$ admits a special basis, the so
called 
B-spline basis $(N_{n,i})_i$, that consists of non-negative and localized
functions $N_{n,i}$. 
Here, the term ``localized'' means that the support of each function $N_{n,i}$
consists of at most $k$ neighbouring atoms of $\mathscr F_n$.
A second reason is that if $(\mathscr F_n)$ is an increasing sequence of
interval $\sigma$-algebras, then the sequence of corresponding spline spaces $S_k(\mathscr
F_n)$ is increasing as well. 
Note that the aforementioned properties of the B-spline functions $(N_{n,i})$ imply that they
do not form an orthogonal basis of $S_k(\mathscr F_n)$ for $k\geq 2$.
For more information on spline functions,
see e.g. \cite{Schumaker2007}.
Let $P_n^{k}$ be the orthogonal projector onto
$S_{k}(\mathscr F_n)$ with respect to the
$L^2$  inner product on $I$  equipped with the Lebesgue measure.  
Since the space $S_{1}(\mathscr F_n)$ consists of
piecewise constant functions,
 $P_n^{1}$ is the conditional expectation
operator with respect to the $\sigma$-algebra $\mathscr F_n$ and the Lebesgue
measure.
In general, the operator $P_n^{k}$ can be written in terms of the B-spline basis $(N_{n,i})$ as
\[
	P_n^{k} f = \sum_i \int_I f(x) N_{n,i}(x)\dif x \cdot N_{n,i}^*,
\]
where the functions $(N_{n,i}^*)$,
contained in the spline space $S_{k}(\mathscr F_n)$, are the biorthogonal (or
dual) system to the
B-spline basis $(N_{n,i})$. 
 Due to the uniform boundedness of the B-spline functions
 $N_{n,i}$, we are able to insert functions $f$ in formula
\eqref{eq:Pn} that are contained not only in $L^2$, but in 
$L^1$, thereby extending the operators
$P_n^{k}$ to $L^1$.

 Similarly to the definition of martingales, we adopt the following
 notion introduced in \cite{Passenbrunner2020}:
let $(X_n)_{n\geq 1}$ be a sequence of functions in the space $L^1$. We call this
sequence a \emph{$k$-martingale spline sequence} (adapted to $(\mathscr F_n))$
if
\[
	P_n^{k} X_{n+1} = X_n,\qquad n\geq 1.
\]
The local nature of the B-splines and the nestedness of the spaces
$(S_k(\mathscr F_n))_n$ ultimately allow us to 
 transfer the classical martingale theorems discussed above
	to
	$k$-martingale spline sequences adapted to  \emph{arbitrary}
	interval filtrations ($\mathscr F_n$) and for any positive integer $k$, just by replacing
	conditional expectation operators with the spline projection operators
	$P_n^{k}$.

Assume that, for all $n$,
$\mathscr F_n$ arises from $\mathscr F_{n-1}$ by the subdivision of exactly
one atom of $\mathscr F_{n-1}$ into two intervals $L_n, R_n$ as atoms of
$\mathscr F_n$.
In the case $k=1$, spline differences $dX_n$ are the same as martingale
differences and then given by a constant multiple
of the generalized Haar function $|R_{n}| \charfun_{L_n} - |L_n|
\charfun_{R_n}$. Similarly, for $k>1$, there exists a system of (unlocalized)
orthogonal spline functions $(f_n)$ so that $dX_n$ is a constant multiple of $f_n$.
The following statements are true: 
	\begin{enumerate}
		\item $L^1$-bounded $k$-martingale spline sequences $(X_n)$
			converge almost everywhere to some $L^1$-function.
			\cite{PassenbrunnerShadrin2014,
			MuellerPassenbrunner2020}
		\item 
		Inequality \eqref{eq:strong_p_mart} holds for $k$-martingale
		spline sequences $(X_n)$ with a constant $C_{p,k}$
		depending only on $p$ and $k$ but \emph{not} on the interval filtration
		$(\mathscr F_n)$ (see \cite{GevorkyanKamont2004} for $k=2$ and
		\cite{Passenbrunner2014}  for general $k$).
	\item \label{it:cz} By using Calderon-Zygmund operator techniques, A. Kamont and K. Keryan
	\cite{KamontKeryan2021}
	showed under certain geometric conditions ($(k-1)$-regularity, cf.
	Definition~\ref{def:regularity} for $d=1$) on the filtration
	$(\mathscr F_n)$ that \eqref{eq:wt_mart} also holds for $k$-martingale
	spline sequences $(X_n)$.
\end{enumerate}

	 In this article we are concerned with similar results pertaining to
	 tensor product spline projections. 
	 Let $d$ be a positive integer and, for $j=1,\ldots,d$, let $(\mathscr
	 F_n^{j})$ be an interval filtration on the bounded interval $I^j\subset \mathbb R$. Filtrations $(\mathscr
	 F_n)$ of the form $\mathscr F_n = \mathscr F_n^1\otimes \cdots \otimes
	 \mathscr F_n^d$ will be called an \emph{interval filtration} on 
	 the $d$-dimensional rectangle $I^1 \times \cdots \times I^d$. 
	 Then, the atoms of $\mathscr F_n$ are of the form
	 $A_1\times \cdots\times A_d$ with atoms $A_j$ in $\mathscr F_n^j$.
	 For a tuple $k=(k_1,\ldots,k_d)$ consisting of $d$ positive integers,
	 denote by $P_n^{k}$ 
	 the orthogonal projector with respect to
	 $d$-dimensional Lebesgue measure $|\cdot|$ onto the tensor product
	 spline space $S_k(\mathscr F_n) = S_{k_1}(\mathscr F_n^{1}) \otimes \cdots \otimes
	 S_{k_d}(\mathscr F_n^{d})$.
	In \cite{Passenbrunner2021} we show that an $L^1$-bounded sequence of functions $(X_n)$
	with $P_n^k X_{n+1} = X_n$ converges almost everywhere to some
	$L^1$-function.

Now we assume that, for $n\geq 1$,  $\mathscr F_{n}$ is of the  form that $\mathscr
F_{n}=\mathscr F_{n}^{1} \otimes \cdots \otimes \mathscr F_{n}^d$ arises 
from $\mathscr F_{n-1} = \mathscr F_{n-1}^{1} \otimes \cdots \otimes \mathscr
F_{n-1}^d$
in the way that there exists a coordinate $\delta_0\in \{1,\ldots,d\}$ so that  
$\mathscr F_{n}^\delta = \mathscr F_{n-1}^\delta$ for $\delta\neq \delta_0$ and
$\mathscr F_{n}^{\delta_0}$ 
arises from $\mathscr F_{n-1}^{\delta_0}$ by splitting exactly one atom of $\mathscr
F_{n-1}^{\delta_0}$ into two atoms of $\mathscr F_{n}^{\delta_0}$.
	 In Section~\ref{sec:tensor_def} we describe an
	 orthonormal system $(f_\ell)$ consisting of tensor products of spline
	 functions so that there exists an increasing sequence of integers
	 $(M_n)$ satisfying
	 \[
		 S_k(\mathscr F_n) = \lin\{ f_\ell : \ell \leq M_n \}\qquad
		 \text{for all $n$}.
	 \]
	 In the special case $k=(1,\ldots,1)$, if $M_{n-1} < \ell \leq M_n$,
	 those functions $f_\ell$ are given by (a constant multiple of) the tensor product of
		one generalized Haar function in direction $\delta_0$ with 
	 $(d-1)$ characteristic
	 functions of atoms in $(\mathscr F_n^\delta)$ in the directions
	 $\delta\neq \delta_0$. Therefore, in this case, $(f_\ell)$ is a
	 martingale difference sequence.

	 Here, we extend the result concerning a.e. convergence from
	 \cite{Passenbrunner2021} and show that partial sums of the form
	 $X_n=\sum_{\ell\leq n} a_\ell f_\ell$, that are uniformly
	 $L^1$-bounded, converge almost
	 everywhere. Moreover, we give sufficient geometric conditions on the
	 filtration $(\mathscr F_n)$ (cf. Theorem~\ref{thm:main}) so that we
	 have the following weak type inequality,
	 similar to \eqref{eq:wt_mart}:
	 \begin{equation}\label{eq:wt_intro}
		 \sup_{\lambda > 0} \lambda \cdot \Big| \Big\{ \sup_n \Big|
			 \sum_{\ell\leq n} \varepsilon_\ell a_\ell f_\ell \Big|
		 > \lambda\Big\} \Big| \leq C \sup_n \| X_n \|_1
	 \end{equation}
	 for some constant $C$,
	 all sequences $(\varepsilon_n)$ of signs and all sequences of
	 coefficients $(a_n)$.

	 We note two things.
	 Firstly, by specializing Theorem~\ref{thm:main} to $d=1$,
	 our sufficient conditions on $(\mathscr F_n)$ for inequality
	 \eqref{eq:wt_intro} are less restrictive than the sufficient condition in
	 item \eqref{it:cz} ($(k-1)$-regularity implies $k$-regularity,
	 cf. Definition~\ref{def:regularity} and the succeeding remark).
	 Secondly, for $d\geq 2$, our sufficient conditions allow for an arbitrary
	 ratio of sidelengths of atoms of $\mathscr F_n$, meaning that the
	 rectangles that are atoms of $\mathscr F_n$ can be very long in one
	 direction and very short in another direction.

	 The organization of the article is as follows. In
	 Section~\ref{sec:prelim} we collect known results about polynomials and
	 spline functions that are used in the
	 sequel. In Section~\ref{sec:tensor_orth} we construct multivariate
	 orthonormal spline functions $(f_n)$. Section~\ref{sec:main} contains
	 the formulation of our main result (Theorem~\ref{thm:main}) that
	 inequality \eqref{eq:wt_intro} is valid under
	 certain geometric conditions on the filtration $(\mathscr F_n)$ that
	 are also defined and analyzed here. Finally,
	 Section~\ref{sec:proof_main} contains the proof of the main result.

	 \section{Preliminaries}\label{sec:prelim}
\subsection{Polynomials}
We will need the following multi-dimensional version of Remez' theorem
(see \cite{Ga2001,BrudnyiGanzburg1973}). If
	$p(x)=\sum_{\alpha\in \Lambda} a_\alpha x^\alpha$ is a $d$-variate polynomial
	where $\Lambda$ is a finite set containing $d$-dimensional multiindices, the
	degree of $p$ is defined as $\max\{\sum_{i=1}^d \alpha_i :
\alpha\in \Lambda\}$.
Recall that a convex body in $\mathbb R^d$ is a compact, convex set with non-empty interior.
\begin{thm}[Remez, Brudnyi, Ganzburg]
	Let $d\in\mathbb N$, $V\subset \mathbb R^d$ a convex body and $E\subset V$ a measurable
	subset. Then, for all polynomials $p$ of degree $r$ on $V$,
	\begin{equation*}
		\| p \|_{L^\infty(V)} \leq \bigg( 4d \frac{|V|}{|E|}\bigg)^r \| p
		\|_{L^\infty(E)}.
	\end{equation*}
\end{thm}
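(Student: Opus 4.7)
The plan is to reduce the multivariate statement to the classical one-dimensional Remez inequality
\[
\|q\|_{L^\infty[0,L]}\le \Bigl(\tfrac{4L}{|F|}\Bigr)^{r}\|q\|_{L^\infty(F)}, \qquad F\subset[0,L]\ \text{measurable},
\]
valid for any univariate polynomial $q$ of degree $\le r$, via a polar-coordinate averaging argument. The univariate inequality I would use as a black box.

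First, by continuity of $p$ and compactness of $V$, pick $x^\ast\in V$ with $|p(x^\ast)|=\|p\|_{L^\infty(V)}$. Since $V$ is a convex body and $x^\ast\in V$, the set $V$ is star-shaped with respect to $x^\ast$, so for every unit vector $\xi\in S^{d-1}$ the ray from $x^\ast$ in direction $\xi$ meets $V$ in a segment, parametrized by $t\in[0,r(\xi)]$. Spherical coordinates centered at $x^\ast$ then give
\begin{align*}
|V|&=\int_{S^{d-1}}\int_0^{r(\xi)} t^{d-1}\,dt\,d\sigma(\xi) = \int_{S^{d-1}}\frac{r(\xi)^d}{d}\,d\sigma(\xi),\\
|E|&=\int_{S^{d-1}}\int_0^{r(\xi)}\charfun_E(x^\ast+t\xi)\,t^{d-1}\,dt\,d\sigma(\xi).
\end{align*}

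Second, I would apply pigeonhole to the ratio of these two integrals to produce a direction $\xi_0\in S^{d-1}$ with
\[
\int_0^{r(\xi_0)}\charfun_E(x^\ast+t\xi_0)\,t^{d-1}\,dt \ge \frac{|E|}{|V|}\cdot\frac{r(\xi_0)^d}{d}.
\]
Bounding $t^{d-1}\le r(\xi_0)^{d-1}$ on the left, the set $F:=\{t\in[0,r(\xi_0)]:x^\ast+t\xi_0\in E\}$ satisfies
\[
|F|\ge \frac{|E|\,r(\xi_0)}{d|V|},\qquad\text{equivalently}\qquad \frac{r(\xi_0)}{|F|}\le \frac{d|V|}{|E|}.
\]

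Third, I would restrict $p$ to this chord by setting $q(t):=p(x^\ast+t\xi_0)$, a univariate polynomial of degree $\le r$ on $[0,r(\xi_0)]$. Applying the 1D Remez inequality on this interval,
\[
|p(x^\ast)|=|q(0)|\le \|q\|_{L^\infty[0,r(\xi_0)]}\le\Bigl(\tfrac{4r(\xi_0)}{|F|}\Bigr)^{r}\|q\|_{L^\infty(F)}\le \Bigl(\tfrac{4d|V|}{|E|}\Bigr)^{r}\|p\|_{L^\infty(E)},
\]
which is the claimed inequality since $|p(x^\ast)|=\|p\|_{L^\infty(V)}$.

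The delicate step is the averaging, because it is exactly there that the dimensional factor $d$ enters: the Jacobian $t^{d-1}$ is compared against its maximum value $r(\xi)^{d-1}$ on the $E$-side, and the resulting loss of a factor $d$ is what produces the constant $4d$ rather than $4$. All other ingredients (existence of a maximizer, star-shapedness with respect to an interior or boundary point of $V$, and the classical 1D Remez estimate) are routine.
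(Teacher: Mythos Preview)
The paper does not give its own proof of this theorem: it is quoted from the literature (Brudnyi--Ganzburg 1973, Ganzburg 2001) and used only through Corollary~\ref{cor:remez}. So there is nothing in the paper to compare your argument against line by line.

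That said, your proposal is correct and is in fact the standard Brudnyi--Ganzburg reduction to the one-dimensional Remez inequality. The three ingredients---choosing a maximizing point $x^\ast$, writing $|V|$ and $|E|$ in spherical coordinates about $x^\ast$, and selecting a good direction $\xi_0$ by comparing the two radial integrals---are exactly how the cited references proceed. Two small remarks: (i)~the existence of $\xi_0$ with the stated inequality follows because a strict pointwise inequality $g(\xi)<\tfrac{|E|}{|V|}h(\xi)$ on the positive-measure set $\{r(\xi)>0\}$ would force $|E|<|E|$ (here you may assume $|E|>0$, since otherwise the conclusion is vacuous); (ii)~the one-dimensional bound $\|q\|_{L^\infty[0,L]}\le (4L/|F|)^r\|q\|_{L^\infty(F)}$ is the Chebyshev estimate $T_r\bigl(\tfrac{2L-|F|}{|F|}\bigr)\le(4L/|F|)^r$, which you correctly take as a black box. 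Your identification of the averaging step as the source of the dimensional factor $d$ is also right.
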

We have the following corollary:
\begin{cor} \label{cor:remez}
Let $p$ be a polynomial of degree $r$ on a convex body $V\subset \mathbb R^d$. Then
\begin{equation*}
	\big|\big\{ x \in V : |p(x)| \geq (8d)^{-r} \|p\|_{L^\infty(V)} \big\}\big| \geq |V|/2.
\end{equation*}
\end{cor}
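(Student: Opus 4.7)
The plan is to run a direct proof by contradiction using the Remez--Brudnyi--Ganzburg inequality just stated. Set $\lambda = (8d)^{-r}\|p\|_{L^\infty(V)}$ and let $F = \{x\in V : |p(x)| < \lambda\}$, the complement in $V$ of the set whose measure we want to bound from below. It suffices to prove that $|F| \leq |V|/2$.

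Assume for contradiction that $|F| > |V|/2$. Then $F$ is measurable and has positive measure, so the theorem above applies with the role of $E$ played by $F$, yielding
\[
	\|p\|_{L^\infty(V)} \leq \Bigl(\tfrac{4d|V|}{|F|}\Bigr)^r \|p\|_{L^\infty(F)}.
\]
By the very definition of $F$ we have $\|p\|_{L^\infty(F)} \leq \lambda = (8d)^{-r}\|p\|_{L^\infty(V)}$, and the assumption $|F| > |V|/2$ gives the strict bound $4d|V|/|F| < 8d$. Combining these (with $r \geq 1$, the case $r=0$ being trivial) produces
\[
	\|p\|_{L^\infty(V)} \leq \Bigl(\tfrac{4d|V|}{|F|}\Bigr)^r (8d)^{-r}\|p\|_{L^\infty(V)} < (8d)^r (8d)^{-r} \|p\|_{L^\infty(V)} = \|p\|_{L^\infty(V)},
\]
which is a strict inequality of $\|p\|_{L^\infty(V)}$ with itself. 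This contradiction forces $|F| \leq |V|/2$, so its complement $\{|p| \geq \lambda\}$ has measure at least $|V|/2$, as claimed.

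There is essentially no serious obstacle here; the corollary is a standard reformulation of Remez in terms of sublevel sets. The only point requiring care is the extraction of a \emph{strict} inequality $4d|V|/|F| < 8d$ from the strict measure assumption $|F| > |V|/2$, since the non-strict analogue would only give $\|p\|_{L^\infty(V)} \leq \|p\|_{L^\infty(V)}$ and fail to close the contradiction.
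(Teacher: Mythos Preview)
Your proof is correct and is exactly the approach taken in the paper: apply the Remez--Brudnyi--Ganzburg inequality to the sublevel set $E=\{x\in V:|p(x)|\leq (8d)^{-r}\|p\|_{L^\infty(V)}\}$ and read off that $|E|\leq |V|/2$. The paper states this in one line without spelling out the contradiction, but your more detailed version (including the care with the strict inequality) is the same argument.
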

\begin{proof}
	This follows from an application of the above theorem to the set $E = \{x\in
		V : |p(x)| \leq (8d)^{-r} \|p\|_{L^\infty(V)}\}$.
\end{proof}

\subsection{Spline spaces}
Consider an \emph{interval $\sigma$-algebra $\mathscr F$}, i.e. a
$\sigma$-algebra that is generated by
a partition of a bounded interval $I\subset \mathbb R$ into finitely many 
intervals of positive length as atoms of $\mathscr F$. Let $k$ be an arbitrary
positive integer.
Let $S_k(\mathscr F)$ be the spline space of order $k$ corresponding to
the $\sigma$-algebra $\mathscr F$ defined in~\eqref{eq:defSk} and let $(N_i)$ be
the B-spline basis of $S_k(\mathscr F)$ that forms a partition of unity.

In the next result and in what follows,
we use the notation $A(t)\lesssim_x B(t)$ if there exists a
constant $c$ depending only on the order parameter $k$ and on $x$ so that $A(t)\leq c B(t)$
for all $t$, where $t$ denotes all implicit or explicit dependencies that the
symbols $A$ and $B$ might have. Similarly we use the symbols $\gtrsim_x$ and $\simeq_x$.

\begin{prop}[B-spline stability]\label{prop:lpstab} 
	Let $1\leq p< \infty$ and $g=\sum_{j} a_j N_{j}$ be a linear
	combination of B-splines. Then,
\begin{equation}\label{eq:lpstab}
	|a_j|\lesssim |K_j|^{-1/p}\|g\|_{L^p(K_j)},\qquad \text{for all $j$},
\end{equation}
where $K_j\subseteq \supp N_{j}$ is an atom of $\mathscr F$  having  maximal length. Additionally,
\begin{equation}\label{eq:deboorlpstab}
	\|g\|_p\simeq \Big(\sum_{j} |a_j|^p \cdot |\supp N_j|\Big)^{1/p}.
\end{equation}
\end{prop}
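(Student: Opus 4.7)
The plan is to prove \eqref{eq:lpstab} first and then deduce both directions of \eqref{eq:deboorlpstab} from it. The two main ingredients are the Remez-type bound already recorded as Corollary~\ref{cor:remez} and a classical de Boor dual functional for B-splines.

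For \eqref{eq:lpstab}: since $g$ restricted to the atom $K_j$ is a polynomial of degree less than $k$ in one variable, Corollary~\ref{cor:remez} with $d=1$ applied to $g|_{K_j}$ (together with a routine computation relating the $L^p$ mass on a set of measure $\geq |K_j|/2$ to $\|g\|_{L^\infty(K_j)}$) yields
\[
\|g\|_{L^\infty(K_j)}\lesssim |K_j|^{-1/p}\|g\|_{L^p(K_j)},
\]
so it suffices to show $|a_j|\lesssim \|g\|_{L^\infty(K_j)}$ with a constant depending only on $k$. For this I would invoke the standard fact that the B-splines $N_i$ whose support contains $K_j$ restrict to $K_j$ as a linearly independent family of polynomials of degree less than $k$, and thus admit a dual functional $\lambda_j$ on the $k$-dimensional space of such polynomials with $\lambda_j(N_i|_{K_j})=\delta_{ij}$, whence $a_j=\lambda_j(g|_{K_j})$. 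After an affine rescaling $K_j\to [0,1]$, the functional $\lambda_j$ depends only on the normalized positions of the at most $k-1$ neighbouring knots on each side of $K_j$ inside $\supp N_j$; these lie in a bounded configuration space, with the only degenerations being knot coalescences handled uniformly by divided-difference formulas. A compactness argument (or an explicit Marsden-type expression for $\lambda_j$) then yields the required $k$-only constant.

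For \eqref{eq:deboorlpstab}, the upper bound follows from the triangle inequality combined with the bounded-overlap property that at most $k$ B-splines are simultaneously nonzero at any point, together with $\|N_j\|_p\simeq |\supp N_j|^{1/p}$ (immediate from $0\le N_j\le 1$ and $\int N_j=|\supp N_j|/k$, plus a Remez-type lower bound on a maximal atom for the matching lower estimate of $\|N_j\|_p$). The lower bound uses \eqref{eq:lpstab} raised to the $p$-th power,
\[
|a_j|^p\,|\supp N_j|\simeq |a_j|^p\,|K_j|\lesssim \|g\|_{L^p(K_j)}^p,
\]
where $|K_j|\simeq |\supp N_j|$ because $K_j$ is the longest of at most $k$ atoms partitioning $\supp N_j$; summing over $j$ and observing that each atom plays the role of $K_j$ for at most $k$ indices completes the estimate.

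The main obstacle is the uniformity of the constant in the dual-functional bound in step one: the functional $\lambda_j$ genuinely depends on the local knot configuration around $K_j$, and one must show that this dependence collapses to a dependence on $k$ alone. All other steps are purely combinatorial/geometric or rely on the already stated Remez corollary, so the heart of the proof lives in that one de Boor-type stability estimate.
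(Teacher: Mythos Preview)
The paper does not actually prove this proposition; the sentence following it simply cites \cite[Chapter~5, Lemmas~4.1 and~4.2]{DeVoreLorentz1993}. Your sketch is essentially the classical argument found in that reference, so there is nothing substantive to compare against.

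One caveat on the compactness route you propose for the uniformity of the dual-functional constant: after rescaling $K_j$ to $[0,1]$, the polynomials $N_i|_{K_j}$ depend on the knots determining \emph{all} B-splines nonzero on $K_j$, not only those inside $\supp N_j$, and those normalized knot positions are unbounded when neighbouring atoms are much longer than $K_j$. So the configuration space is not compact as you describe it (and the relevant degenerations are knots escaping to $\pm\infty$, not coalescences). The explicit de Boor functional you allude to---built from Marsden's identity and bounded via Markov's inequality on $K_j$, using $|\supp N_j|\le k\,|K_j|$---is the clean way to obtain the $k$-only constant, and is what the cited reference does. With that route your argument is correct; the deduction of \eqref{eq:deboorlpstab} from \eqref{eq:lpstab} is fine as written.
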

The two inequalites \eqref{eq:lpstab} and \eqref{eq:deboorlpstab} are Lemma 4.1
and Lemma 4.2 in \cite[Chapter 5]{DeVoreLorentz1993}, respectively.
For more information on spline functions,
see e.g. \cite{Schumaker2007}.

Let $P$ be the orthogonal projector onto
$S_k(\mathscr F)$ with respect to the
$L^2$  inner product $\langle\cdot,\cdot\rangle$ on $I$  equipped with the Lebesgue measure.  
Since the space $S_{1}(\mathscr F)$ consists of
piecewise constant functions,
 for the choice $k=1$, the operator $P$ is the conditional expectation
operator with respect to the $\sigma$-algebra $\mathscr F$ and the Lebesgue
measure.
This orthogonal projector is uniformly bounded on $L^\infty$ by a
constant depending only on the spline order $k$, which is content of the
following celebrated theorem by A. Shadrin \cite{Shadrin2001}:

\begin{thm}\label{thm:shadrin}
	Let $P$ be the orthogonal projector onto {$S_k({\mathscr F})$}
	with respect to the canonical inner product in $L^2(I)$.
	Then, 
	\begin{equation*}
		\| P : L^\infty(I) \to L^\infty(I) \| \lesssim 1.
	\end{equation*}
\end{thm}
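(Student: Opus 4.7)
The plan is to reduce the claim to a quantitative, scale-invariant bound on the entries of the inverse Gram matrix of the B-spline basis, and then to appeal to the total-positivity and banded-matrix analysis which is the heart of Shadrin's original argument.

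First, I would expand in the B-spline basis: write $Pf=\sum_i c_i N_i$, and use the defining orthogonality relations $\langle Pf, N_j\rangle=\langle f, N_j\rangle$ to arrive at the linear system $Gc=b$, where $G_{ij}=\langle N_i,N_j\rangle$ and $b_j=\langle f,N_j\rangle$. Since $(N_i)$ is a partition of unity,
\[
\|Pf\|_{L^\infty(I)}\le \max_i|c_i|=\max_i|(G^{-1}b)_i|,\qquad |b_j|\le \|f\|_\infty|\supp N_j|,
\]
so the theorem reduces to the mesh-uniform inequality
\[
\max_i\sum_j|(G^{-1})_{ij}|\,|\supp N_j|\lesssim 1.
\]
This is the estimate I would aim for.

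Next I would exploit three structural features of $G$: it is banded with bandwidth $k$ (because $\supp N_i\cap\supp N_j=\emptyset$ once $|i-j|\ge k$); it is symmetric positive definite; and by the total positivity of the B-spline basis it is itself totally positive. The target is then a geometric off-diagonal decay
\[
|(G^{-1})_{ij}|\lesssim_k \frac{\gamma_k^{|i-j|}}{\sqrt{|\supp N_i|\cdot|\supp N_j|}},\qquad 0<\gamma_k<1,
\]
uniformly over all interval $\sigma$-algebras $\mathscr F$. Combined with the B-spline stability inequality \eqref{eq:deboorlpstab} in Proposition~\ref{prop:lpstab}, such a bound yields the required estimate by a direct geometric summation.

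The main obstacle, and the content of Shadrin's theorem proper, is establishing this off-diagonal decay with a rate $\gamma_k$ that is independent of the knot partition. A straightforward condition-number bound fails, because the smallest eigenvalue of $G$ degenerates as neighbouring knots cluster; any proof must therefore be scale-invariant in the knot spacings. The route I would pursue is to represent the cofactor entries of $G^{-1}$ as ratios of Gramians of sub-families of B-splines, and to transport bounds through local refinements of $\mathscr F$ using the variation-diminishing property of totally positive matrices together with the Cauchy--Binet formula. Making these determinantal manipulations quantitative and mesh-independent is the deep step; once this is available, the conclusion $\|P:L^\infty\to L^\infty\|\lesssim 1$ follows by assembling the reduction of the first paragraph with Proposition~\ref{prop:lpstab} and a geometric series.
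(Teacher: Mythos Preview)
The paper does not prove this theorem; it is quoted as a known result of Shadrin \cite{Shadrin2001}, and the companion decay estimate (Theorem~\ref{thm:maintool}) is likewise cited rather than proved.

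Your reduction to $\max_i\sum_j|(G^{-1})_{ij}|\,|\supp N_j|\lesssim 1$ is correct, but the decay you then target, with $\sqrt{|\supp N_i|\,|\supp N_j|}$ in the denominator, is \emph{insufficient} to close it. In fact that estimate is cheap and classical: by Proposition~\ref{prop:lpstab} with $p=2$ the rescaled Gram matrix $\tilde G_{ij}=G_{ij}/\sqrt{|\supp N_i|\,|\supp N_j|}$ has spectrum in $[c_k,C_k]$ independently of the mesh, and since $\tilde G$ is banded, Demko's theorem on inverses of well-conditioned banded matrices already yields your displayed bound with a mesh-independent $\gamma_k<1$. But inserting it gives
\[
\sum_j|(G^{-1})_{ij}|\,|\supp N_j|\lesssim \sum_j \gamma_k^{|i-j|}\sqrt{|\supp N_j|/|\supp N_i|},
\]
and for a geometric knot sequence with local ratio $r>\gamma_k^{-2}$ the right-hand side diverges. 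So the ``direct geometric summation'' you announce does not go through; this is precisely why de~Boor's conjecture resisted the obvious banded-matrix approach for three decades.

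The estimate that \emph{does} sum is Theorem~\ref{thm:maintool}, with $|\conv(\supp N_i,\supp N_j)|$ in place of the geometric mean in the denominator; the convex hull dominates the geometric mean by exactly the factor needed to tame the sum above. However, the proof of Theorem~\ref{thm:maintool} in \cite{PassenbrunnerShadrin2014} takes Shadrin's theorem as input, so routing through it would be circular. Shadrin's own argument does not proceed via inverse-matrix decay at all: it bounds $|\supp N_i|\cdot\|N_i^*\|_\infty$ directly by constructing and analysing an extremal null spline, exploiting total positivity in a way quite different from the cofactor/Cauchy--Binet manipulations you sketch.
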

Since orthogonal projectors are self-adjoint, this also implies that the
operators $P$ are uniformly bounded on $L^1(I)$ by the same constant.

The operator $P$ can be written in terms of the
B-spline basis $(N_{i})$ and its dual basis $(N_i^*)$ as 
\begin{equation}\label{eq:Pn}
	P f = \sum_i \int_I f(x) N_{i}(x)\dif x \cdot N_{i}^*,
\end{equation}
where the functions $N_i^*\in S_k(\mathscr F)$ are given by the conditions
$\langle N_i^*, N_j\rangle = \delta_{ij}$ for all $i,j$ and $\delta_{ij} = 1$ if
$i=j$ and $\delta_{ij}=0$ otherwise, where we denote $\langle f,g\rangle = \int_I f(x)g(x)\dif x$.
The dual B-spline functions $N_i^*$ can be written in terms of the B-spline
basis $N_{i}^* = \sum_j a_{ij}
N_{j}$ for some coefficients $(a_{ij})$.
Those coefficients $(a_{ij})$ admit some fast decay away from the diagonal:
\begin{thm}[\cite{PassenbrunnerShadrin2014}]\label{thm:maintool}
There exists a constant $q\in(0,1)$ depending only on the spline order
$k$ so that 
\[
|a_{ij}|\lesssim \frac{q^{|i-j|}}{|\conv(\supp N_i , \supp N_j)|},\qquad
\text{for all } i,j,
\]
where $\conv(A,B)$ denotes the smallest interval containing both sets $A,B$.
\end{thm}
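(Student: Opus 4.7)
The plan is to identify $(a_{ij})$ as the entries of the inverse Gram matrix and then exploit its banded structure together with Shadrin's theorem to obtain off-diagonal decay. Since $N_i^* = \sum_j a_{ij} N_j$ and the biorthogonality $\langle N_i^*, N_m\rangle = \delta_{im}$ yields $\sum_j a_{ij} \langle N_j, N_m\rangle = \delta_{im}$, the matrix $A=(a_{ij})$ is the inverse of the Gram matrix $G_{ij}=\langle N_i,N_j\rangle$. Because B-splines are supported on at most $k$ consecutive atoms of $\mathscr F$, $G_{ij}=0$ whenever $|i-j|\geq k$; so $G$ is symmetric, positive definite and banded with bandwidth $k$.

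I would first reduce to a pointwise estimate on the dual function $N_i^*$. By Proposition~\ref{prop:lpstab} applied to $N_i^* = \sum_j a_{ij} N_j$ on an atom $K_j \subseteq \supp N_j$ of maximal length,
\[
|a_{ij}| \lesssim |K_j|^{-1}\,\|N_i^*\|_{L^\infty(K_j)}.
\]
Thus it suffices to prove $\|N_i^*\|_{L^\infty(K_j)} \lesssim q^{|i-j|}|K_j|/|\conv(\supp N_i,\supp N_j)|$. A baseline non-decaying bound $\|N_i^*\|_\infty \lesssim |\supp N_i|^{-1}$ follows from Shadrin's theorem (Theorem~\ref{thm:shadrin}): using the $L^\infty$-boundedness of $P$ and testing the representation~\eqref{eq:Pn} against an $L^\infty$-normalized function supported on $\supp N_i$, the $i$-th dual coefficient comes out at scale $|\supp N_i|^{-1}$.

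To upgrade the baseline to geometric decay, I would argue by contraction and iteration. On any interval $J$ disjoint from $\supp N_i$, the function $N_i^*$ is $L^2$-orthogonal to every B-spline $N_m$ whose support lies in $J$. Using the bandedness of $G$, $N_i^*\charfun_J$ can be written as $P$ applied to the same thing plus a boundary correction involving only the at most $k$ splines that straddle $\partial J$; Shadrin's uniform $L^\infty$-bound on $P$ then gives a pointwise estimate for $N_i^*$ on the inner part of $J$ in terms of its values on the boundary strip. One extracts from this that, if $J'\subset J$ is separated from $\supp N_i$ by at least one additional B-spline support, then
\[
\|N_i^*\|_{L^\infty(J')} \leq q\,\|N_i^*\|_{L^\infty(J)}
\]
for some $q=q(k)<1$ independent of the partition. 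Iterating this contraction roughly $|i-j|$ times picks up the factor $q^{|i-j|}$, and the denominator $|\conv(\supp N_i,\supp N_j)|$ is obtained by telescoping along a chain $i=i_0,i_1,\ldots,i_m=j$: each local step carries the factor $q$ together with a local length scale, and the local length scales along the chain sum to $|\conv(\supp N_i,\supp N_j)|$.

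The main obstacle is the contraction with a constant $q<1$ that is uniform in the interval filtration $\mathscr F$. For $k=1$ the matrix $G$ is diagonal and the claim is trivial; for $k\geq 2$, without Shadrin's theorem any attempt to propagate decay would produce a constant depending on the mesh ratio of $\mathscr F$ and so collapse as neighbouring atoms become very disparate. It is precisely the partition-free nature of Theorem~\ref{thm:shadrin} that makes the universal geometric decay rate $q=q(k)$ available.
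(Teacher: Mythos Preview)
The paper does not prove this theorem; it is quoted verbatim from \cite{PassenbrunnerShadrin2014} and used as a black box throughout. There is therefore no proof in the present paper to compare your proposal against.

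As for the proposal itself: the overall architecture---identify $(a_{ij})$ as the inverse of the banded Gram matrix, extract a baseline $L^\infty$ bound on $N_i^*$ from Shadrin's theorem, and then iterate a contraction to manufacture the factor $q^{|i-j|}$---is indeed the strategy of the cited reference. However, the step you label ``one extracts from this'' is precisely the content of that paper, and your sketch does not supply it. Writing $N_i^*\charfun_J$ as $P(N_i^*\charfun_J)$ plus a boundary correction and invoking Shadrin gives you a bound of the form $\|N_i^*\|_{L^\infty(J')}\leq C_k\,\|N_i^*\|_{L^\infty(J\setminus J')}$ with $C_k$ the Shadrin constant, not a contraction with factor $<1$; turning this into a genuine $q<1$ uniformly in $\mathscr F$ requires an additional idea (in \cite{PassenbrunnerShadrin2014} this comes from comparing the inverse of the full Gram matrix to the inverse of the Gram matrix on a truncated knot sequence, using the checkerboard sign pattern from total positivity to produce cancellation rather than merely a triangle-inequality bound). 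Your telescoping argument for the denominator $|\conv(\supp N_i,\supp N_j)|$ is likewise not how it arises: in the reference the denominator comes out directly from the diagonal entries $a_{ii}\simeq |\supp N_i|^{-1}$ together with the relative inequality between full and truncated inverses, not from summing local length scales along a chain.
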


	\subsection{Totally positive matrices}\label{sec:tp}
We say that a matrix $B=(b_{ij})_{i,j=1}^n$ is \emph{totally positive} if for
any choice of ${\bf m_1, m_2}\subset \{1,\ldots,n\}$ 
with the same cardinality,
the determinant of the matrix $B({\bf m_1}, {\bf m_2})$, resulting from
$B$ by taking the rows with indices in ${\bf m_1}$ and columns with indices in
${\bf m_2}$, is non-negative.

For totally positive matrices, we have the following well known lemma, which can be found 
in \cite{deBoorJiaPinkus}. 
\begin{lem} \label{lem:tp0}
	If $B\in \mathbb R^{n\times n}$ is invertible and totally positive,
	then, for any integer interval $\bf m$ $\subset \{1,\ldots,n\}$, so is
	the principal submatrix $C := B({\bf m},{\bf m})$ of $B$
	and
	\[
		0 \leq (-1)^{i+j} C^{-1}(i,j) \leq (-1)^{i+j} B^{-1}(i,j),
		\qquad i,j\in \bf m.
	\]
\end{lem}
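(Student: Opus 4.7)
The plan is to establish total positivity of $C$, strict positivity of $\det C$, the sign pattern of $C^{-1}$, and the pointwise comparison by induction on $n-|\mathbf{m}|$, peeling off one extreme index of $\{1,\ldots,n\}$ at a time. Total positivity of $C$ is automatic since every minor of $C$ is a minor of $B$, and strict positivity of $\det C$ is the classical fact that an invertible totally positive matrix has positive principal minors on every integer interval --- which is precisely where the interval hypothesis enters. Once these are in hand, Cramer's rule
\[
C^{-1}(i,j)=(-1)^{i+j}\frac{\det C(\mathbf{m}\setminus\{j\},\mathbf{m}\setminus\{i\})}{\det C}
\]
yields $(-1)^{i+j}C^{-1}(i,j)\geq 0$, since the cofactor in the numerator is itself a non-negative minor of $B$.

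For the comparison inequality I would reduce to the case $\mathbf{m}\neq\{1,\ldots,n\}$ with $n\notin\mathbf{m}$ (the case $1\notin\mathbf{m}$ is symmetric), write
\[
B=\begin{pmatrix}B' & c\\ b^T & a\end{pmatrix},\qquad C=B'(\mathbf{m},\mathbf{m}),
\]
and apply the inductive hypothesis to $B'$ and the same $\mathbf{m}\subset\{1,\ldots,n-1\}$. This reduces the task to the one-step inequality $(-1)^{i+j}(B')^{-1}(i,j)\leq(-1)^{i+j}B^{-1}(i,j)$ for $i,j\in\{1,\ldots,n-1\}$. With the Schur complement $s=a-b^{T}(B')^{-1}c=\det B/\det B'>0$, the block inversion formula gives
\[
B^{-1}(i,j)-(B')^{-1}(i,j)=\frac{1}{s}\,y(i)\,z(j),\qquad y:=(B')^{-1}c,\quad z^{T}:=b^{T}(B')^{-1},
\]
so the problem reduces to showing $(-1)^{i+j}y(i)z(j)\geq 0$.

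To analyze these signs I would solve $B'y=c$ and $z^{T}B'=b^{T}$ by Cramer's rule. For $y(i)$, this writes $y(i)\det B'$ as the determinant of the matrix obtained from $B'$ by replacing its $i$-th column by $c$; after moving this inserted column (the $n$-th column of $B$ truncated to rows $1,\ldots,n-1$) past the $n-1-i$ columns of higher index, one obtains $(-1)^{n-1-i}\det B(\{1,\ldots,n-1\},\{1,\ldots,n\}\setminus\{i\})$, a non-negative minor of $B$. Hence $y(i)=(-1)^{n-1-i}\alpha_{i}$ with $\alpha_{i}\geq 0$, and symmetrically $z(j)=(-1)^{n-1-j}\beta_{j}$ with $\beta_{j}\geq 0$. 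Multiplying and using $(-1)^{2(n-1)}=1$ gives $(-1)^{i+j}y(i)z(j)=\alpha_{i}\beta_{j}\geq 0$, which closes the induction.

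The main obstacle is precisely the sign bookkeeping in this last step: it is crucial that the column and row being inserted carry an \emph{extremal} index (here $n$), so that the reordering parity is a single power $(-1)^{n-1-i}$ whose parity cancels cleanly against $(-1)^{i+j}$. This is exactly why the hypothesis that $\mathbf{m}$ is an integer interval, rather than an arbitrary subset, cannot be dropped: it guarantees at every inductive step that one of the end indices of $\{1,\ldots,n\}$ lies outside $\mathbf{m}$ and is available to be peeled off.
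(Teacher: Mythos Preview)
Your argument is correct. The paper itself does not reprove this lemma (it cites de~Boor), but immediately afterwards proves the stronger Lemma~\ref{lem:tp} for \emph{arbitrary} subsets $\mathbf{m}\subset\{1,\ldots,n\}$, and that proof is the natural point of comparison.

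Both arguments are inductive and delete one index at a time; the difference is which block-inversion identity carries the weight. You express $B^{-1}$ in terms of $(B')^{-1}$ via the Schur complement and then have to determine the signs of $y=(B')^{-1}c$ and $z^{T}=b^{T}(B')^{-1}$ by Cramer's rule plus a column-reordering parity count, which is exactly where extremality of the deleted index is used. The paper goes the other way, using the dual identity: if $C$ is obtained from the invertible matrix $D$ by deleting row and column $\ell$, then
\[
C^{-1}(i,j)=D^{-1}(i,j)-\frac{D^{-1}(i,\ell)\,D^{-1}(\ell,j)}{D^{-1}(\ell,\ell)},
\]
after which both the nonnegativity of $(-1)^{i+j}C^{-1}(i,j)$ and the comparison with $(-1)^{i+j}D^{-1}(i,j)$ follow in one line from the already-known checkerboard sign pattern of $D^{-1}$, with no parity bookkeeping and for \emph{any} $\ell$. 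Consequently your closing remark that the interval hypothesis ``cannot be dropped'' is too strong: it is essential to \emph{your} argument, but Lemma~\ref{lem:tp} shows it is not essential to the conclusion.
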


A proof of this result is contained in \cite{deBoor2012}.
We also have the following straightforward extension, whose proof is similar to
the proof of Lemma~\ref{lem:tp0}, but we include it here for
completeness.
\begin{lem}\label{lem:tp}
	If $B\in \mathbb R^{n\times n}$ is invertible and totally positive,
	then, for every ${\bf m} \subset \{1,\ldots,n\}$, so is
	the principal submatrix $C := B({\bf m},{\bf m})$ of $B$
	and
	\begin{equation}\label{eq:inverse_relative}
		0 \leq (-1)^{i+j} C^{-1}(i,j) \leq (-1)^{i+j} B^{-1}(i,j),
		\qquad i,j\in\bf m.
	\end{equation}
\end{lem}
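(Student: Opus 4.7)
The plan is to extend the proof of Lemma~\ref{lem:tp0} by induction on $|\{1,\ldots,n\}\setminus{\bf m}|$, reducing the problem to the case of removing a single index. First, total positivity of $C = B({\bf m},{\bf m})$ is automatic, since every minor of $C$ is itself a minor of $B$. For the invertibility of $C$, I would invoke Koteljanskii's inequality for principal minors of totally positive matrices, $\det B \leq \det B({\bf m})\cdot\det B({\bf m}^c)$, which, combined with the hypothesis $\det B>0$, forces $\det C>0$.

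Having disposed of total positivity and invertibility, I induct on $|{\bf m}^c|$ to establish~\eqref{eq:inverse_relative}. The base case ${\bf m}=\{1,\ldots,n\}$ is trivial. For the inductive step, pick any $\ell\in{\bf m}^c$ and set ${\bf m}'={\bf m}\cup\{\ell\}$; by the inductive hypothesis, $C':=B({\bf m}')$ is invertible and totally positive and satisfies~\eqref{eq:inverse_relative} with $B$. It therefore suffices to prove the single-index-removal version of the lemma applied to $C'$: given an $N\times N$ invertible totally positive matrix $A$ and some $1\leq k\leq N$, the submatrix $A_k$ obtained by deleting row and column $k$ is again invertible totally positive and satisfies the analogous signed inequality with $A$. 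This is accomplished by the Sherman--Morrison / Schur complement identity: writing $A$ in block form isolating the $k$-th row and column, one expresses $A^{-1}(i,j)$ for $i,j\neq k$ as $A_k^{-1}(i,j)$ plus the rank-one correction $s^{-1}(A_k^{-1}u)_i(v^T A_k^{-1})_j$, where $u$ and $v$ are the restrictions of the $k$-th column and row of $A$ to indices $\neq k$, and $s=A(k,k)-v^T A_k^{-1}u$ is the associated Schur complement.

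The main obstacle, and the only substantive difference from the situation of Lemma~\ref{lem:tp0}, is the sign bookkeeping for this rank-one correction when $k$ is an interior index of $\{1,\ldots,N\}$, since the induction in Lemma~\ref{lem:tp0} only removes boundary indices and so avoids the issue. The key observation is that the sign pattern of inverses of invertible totally positive matrices, $(-1)^{i+k}A^{-1}(i,k)\geq 0$, together with the Schur complement relation $A^{-1}(i,k)=-s^{-1}(A_k^{-1}u)_i$, forces $(A_k^{-1}u)_i$ to have sign $(-1)^{i+k+1}$; analogously $(v^T A_k^{-1})_j$ has sign $(-1)^{j+k+1}$. Their product therefore has sign $(-1)^{i+j}$, which yields $(-1)^{i+j}(A^{-1}(i,j)-A_k^{-1}(i,j))\geq 0$, i.e.\ the upper bound in~\eqref{eq:inverse_relative}; the lower bound $0\leq(-1)^{i+j}A_k^{-1}(i,j)$ is the same sign-pattern assertion applied to $A_k$ itself.
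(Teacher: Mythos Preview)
Your proof is correct and follows essentially the same route as the paper: induct by removing one index at a time, use the Schur-complement/rank-one identity linking $C^{-1}$ and $D^{-1}$ (with $D=B({\bf m}\cup\{\ell\})$), and read off the signed inequality from the checkerboard sign pattern of inverses of invertible totally positive matrices. The only cosmetic difference is that the paper writes the identity directly as
\[
C^{-1}(i,j)=D^{-1}(i,j)-\frac{D^{-1}(i,\ell)\,D^{-1}(\ell,j)}{D^{-1}(\ell,\ell)},
\]
so the sign of the correction term is immediate from the sign pattern of $D^{-1}$ alone, whereas you route the same computation through $A_k^{-1}u$ and then recover the signs from $A^{-1}(i,k)=-s^{-1}(A_k^{-1}u)_i$ (implicitly using $s=1/A^{-1}(k,k)>0$); the two are the same formula in different clothing.
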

\begin{proof}
	Let ${\bf m_1} \subseteq \{1,\ldots,n\}$ and ${\bf m} ={\bf m_1}
	\setminus \{\ell\}$ for some $\ell\in {\bf m_1}$.
	Denote by $C$ the matrix $B({\bf m}, {\bf m})$ and by $D$ the matrix
	$B({\bf m_1}, {\bf m_1})$, which are both totally positive.
	Then we show that if $D$ is invertible, so is $C$ and the matrix $E$, given by
	\begin{equation}\label{eq:E}
		E(i,j) = D^{-1}(i,j) - \frac{ D^{-1}(i,\ell)
		D^{-1}(\ell,j)}{D^{-1}(\ell,\ell)},\qquad i,j\in {\bf m},
	\end{equation}
	is the inverse of $C$. 

	First we note that by the Hadamard inequality for totally positive
	matrices (see e.g. \cite[Theorem 1.21]{Pinkus2010}), we have
	$\det D \leq D(\ell,\ell)\det C$ and we obtain by the invertibility of
	$D$ that $\det D>0$ and therefore also $\det C>0$. Now we invoke the
	formula $D^{-1}(\ell,\ell)= \det C/\det D$ to see that
	$D^{-1}(\ell,\ell)>0$ and the formula in \eqref{eq:E} makes sense.

	Next, we calculate for $r,s\in {\bf m}$
	\begin{align*}
		(CE)(r,s) &= \sum_{u\in {\bf m}} C(r,u)E(u,s) = \sum_{u\in {\bf
		m}} D(r,u) \Big( D^{-1}(u,s) - \frac{ D^{-1}(u,\ell)
		D^{-1}(\ell,s)}{D^{-1}(\ell,\ell)}\Big) \\
		&= \delta_{rs} - D(r,\ell)D^{-1}(\ell,s) - \big(\delta_{r\ell} -
		D(r,\ell)
		D^{-1}(\ell,\ell)\big) \frac{D^{-1}(\ell,s)}{D^{-1}(\ell,\ell)}.
	\end{align*}
	Since $r\neq \ell$ we have $\delta_{r\ell} = 0$, which gives that $(CE)(r,s) =
	\delta_{rs}$ for $r,s\in{\bf m}$. A similar calculation yields
	$(EC)(r,s) = \delta_{rs}$ for $r,s\in {\bf m}$ which implies that the
	matrix $E$, given by formula \eqref{eq:E}, is
	indeed the inverse of $C$.

	Since $D$ is totally positive, for any choice of $i,j\in{\bf m}$ 
	we know that $(-1)^{i+j} D^{-1}(i,j)\geq 0$ 
	and  $ (-1)^{i+j}D^{-1}(i,\ell) D^{-1}(\ell,j) /
	D^{-1}(\ell,\ell)\geq 0$. By equation \eqref{eq:E}, this implies the inequality
	\[
		0\leq (-1)^{i+j} E(i,j)  = (-1)^{i+j} C^{-1}(i,j) \leq
		(-1)^{i+j}D^{-1}(i,j),\qquad i,j\in {\bf m}.
	\]
	Therefore, by induction on the cardinality of ${\bf m}$, this implies
	inequality \eqref{eq:inverse_relative}.
\end{proof}

\subsection{Application of Lemma~\ref{lem:tp} to B-spline matrices}\label{sec:discuss}
Observe that the matrix $(a_{ij})$ -- satisfying $N_i^* = \sum_j a_{ij}N_j$ for
all $i$ -- is given by the inverse of the B-spline Gram
matrix $B=(\langle N_i, N_j\rangle)_{i,j=1}^n$.
This matrix is totally positive, which is 
a consequence of
the fact that the kernel $N_{i}(x)$, depending on the variables $i$ and $x$, 
is totally positive \cite[Theorem 4.1, Chapter 10]{Karlin1968} and
the so called basic composition formula \cite[Chapter 1, Equation
(2.5)]{Karlin1968}.
This means that we can apply Lemma~\ref{lem:tp} to
submatrices of B-spline Gram matrices.

Let $(I_{i})_i$ be an enumeration of the atoms  of $\mathscr F$ for consecutive integers $i$ in the way that
if $i<j$ then $I_{i}$ is to the left of $I_{j}$.
Let $A,B$ be two atoms of $\mathscr F$. If $A=I_{i}$ and $B=I_{j}$ for two
integers $i,j$, we set $d_{\mathscr F}(A,B) = j-i$.
Denote $\BS_i = \supp N_i$, 
and by $A(x)$ the atom of $\mathscr F$ containing the point
$x\in I$.
Denote
by $K_i\subset F_i$ an atom of $\mathscr F$ contained in $F_i$ having maximal
length.
Then, Theorem~\ref{thm:maintool} implies the following pointwise estimate for the dual B-spline
functions:
\begin{equation}\label{eq:dualspline}
	| N_i^*(x) | \lesssim \frac{q^{ |d_{\mathscr F}(K_i, A(x))| }}{
	|\conv(F_i, A(x))| },\qquad \text{$1\leq i\leq n,$  $x\in I$.}
\end{equation}
Choose an arbitrary subset ${\bf m} \subset \{1,\ldots,n\}$. Let $N_i^{{\bf m}*}$,
$i\in \bf m$ be the dual functions to $\{ N_i : i\in {\bf m} \}$.
Then, we can write
\[
	N_i^{{\bf m}*} = \sum_{j\in \bf m} a_{ij}^{\bf m} N_j,
\]
where the coefficients $(a_{ij}^{\bf m})_{i,j\in \bf m}$ are given as the inverse to the matrix $(\langle
N_i,N_j\rangle)_{i,j\in\bf m}$.
Since the matrix $B = (\langle N_i, N_j\rangle)_{i,j=1}^n$ is totally
positive and invertible, 
we can invoke Lemma \ref{lem:tp} to deduce from Theorem~\ref{thm:maintool} that 
\begin{equation}\label{eq:geom_aijm}
	|a_{ij}^{\bf m}|\lesssim \frac{ q^{|i-j|} }{|\conv (\BS_i, 
	\BS_j)|},\qquad i,j\in{\bf m}.
\end{equation}
Therefore, we have the following estimate for the functions $N_i^{ {\bf m}*}$ similar to \eqref{eq:dualspline}:
\begin{equation}\label{eq:dual_estimate}
	|N_i^{ {\bf m}*}(x)| \lesssim \frac{ q^{ |d_{\mathscr F}( K_i, A(x) )|} }{|\conv(F_i,
	A(x))|},\qquad i\in{\bf m}, x\in I. 
\end{equation}
This estimate does not depend on the subset ${\bf m}$ of $\{1,\ldots,n\}$.

If ${\bf m_1} \subset {\bf m}\subset \{1,\ldots,n\}$ with $ {\bf m} \setminus {\bf m_1} =
\{i\}$, we have 
$ N_{i}^{ {\bf m}*} \in \lin \{ N_j : j\in {\bf m} \}$ and 
\[
	\langle N_{i}^{ {\bf m}*}, N_\ell \rangle = 0,\qquad \ell\in {\bf m_1}.
\]
This means that $N_{i}^{ {\bf m}*}$ is 
orthogonal to the span of $ \{N_\ell : \ell\in \bf m_1 \}$.
We know by Lemma~\ref{lem:tp} applied to the subset $\{i\}$ of ${\bf m}$ that
\begin{equation}\label{eq:lowerestimate}
	|a_{ii}^{\bf m}| \geq \frac{1}{\langle N_i, N_i\rangle} \geq 
	\frac{1}{|\BS_i|}.
\end{equation}
 Using \eqref{eq:lowerestimate} 
and the local stability \eqref{eq:lpstab} of B-splines, we
obtain (for $1\leq p<\infty$)
\begin{equation}\label{eq:J1}
	\| N_{i}^{ {\bf m}*} \|_{L^p(K_{i})} \gtrsim |K_{i}|^{1/p} |a_{ii}^{\bf
	m}| \gtrsim
	|K_{i}|^{1/p
	- 1}.
\end{equation}
On the other hand,  by \eqref{eq:deboorlpstab} and \eqref{eq:geom_aijm},
\begin{equation}\label{eq:J2}
	\int |N_{i}^{ {\bf m}*}|^p \simeq \sum_{j\in {\bf m}} |a_{ij}^{\bf m}|^p
	|\BS_j| \lesssim
	\sum_{j\in {\bf m}} \frac{ q^{ p |i-j|} }{ |\conv(\BS_{i},
	\BS_{j})|^p} |\BS_j| \lesssim |\BS_{i}|^{1-p}\lesssim |K_{i}|^{1-p}.
\end{equation}
Inequalities \eqref{eq:J1} and \eqref{eq:J2} together
imply that $\|N_{i}^{ {\bf m} *}\|_p \simeq |K_{i}|^{1/p - 1}$.

\subsection{Orthogonal spline functions}\label{sec:franklin}
Let $(\mathscr F_n)_{n\geq 0}$ be an \emph{interval filtration} on an interval $I$,
which means that $(\mathscr F_n)$ is an increasing sequence of interval $\sigma$-algebras on
the interval~$I$.

Additionally, we assume that $(\mathscr F_n)$ satisfies $\mathscr F_0 = \{\emptyset,I\}$ and,
is in \emph{standard form},
meaning that  for all $n\geq 1$,
$\mathscr F_n$ arises from $\mathscr F_{n-1}$ by the subdivision of exactly
one atom of $\mathscr F_{n-1}$ into two intervals $L_n$ and $R_n$ as atoms of
$\mathscr F_n$.
Then, the codimension
of $S_k(\mathscr F_{n-1})$ in $S_k(\mathscr F_n)$ is one and thus there exists a
unique (up to sign) function $f_n\in S_k(\mathscr F_n)$ that is orthonormal to $S_k(\mathscr
F_{n-1})$.
In the case $k=1$, the function $f_n$ is a constant multiple of the generalized
Haar function $|R_{n}| \charfun_{L_n} - |L_n|
\charfun_{R_n}$. 

We denote $d_n(A,B) = d_{\mathscr F_n}(A,B)$ for two atoms $A,B$ of $\mathscr
F_n$ and we let $A_n(x)$ be the atom of
$\mathscr F_n$ containing the point $x\in I$.

The functions $(f_n)$ satisfy that for every $n$, there exists an atom $J_n$ of $\mathscr F_n$ with the
following properties (\cite{Passenbrunner2014}).
\begin{enumerate}
	\item $|d_n(J_n, L_n)| \leq k$.
	\item There exists a support $F$ of a B-spline
		function in $S_k(\mathscr F_n)$ with $F\supset J_n$ and
		$|F|\lesssim |J_n|$.
	\item Pointwise estimate for $f_n$:
		\begin{equation}\label{eq:estfranklin}
			|f_n(x)| \lesssim \frac{q^{| d_n(A_n(x),J_n) | }
			|J_n|^{1/2} } { |\conv(J_n, A_n(x))| },\qquad x\in I.
		\end{equation}
	\item $\| f_n \|_p \simeq |J_n|^{1/p - 1/2}$ for $1\leq p\leq \infty$.
\end{enumerate}
We say that $J_n$ is the \emph{characteristic interval} of the function
$f_n$.
Additionally we have the following lemma, which is also contained in
\cite{Passenbrunner2014}.
\begin{lem}\label{lem:Jint}
	Let $V\subset I$ be an interval. Then, the cardinality of the set
	\[
		\{ n : J_n \subset V, |J_n|\geq |V|/2\}
	\]
	is bounded by some constant depending only on $k$.
\end{lem}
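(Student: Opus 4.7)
The plan is to show that for every $n$ in $T:=\{n : J_n\subset V,\ |J_n|\geq |V|/2\}$, the atom $X_n := L_n\cup R_n\in \mathscr F_{n-1}$ that is split at step $n$ lies in a bounded enlargement $V^*$ of $V$ with $|V^*|\lesssim |V|$ and satisfies $|X_n|\simeq |V|$. Then the distinct $X_n$'s form a laminar family of such atoms in $V^*$, and an elementary forest-counting argument bounds $|T|$ by a constant depending only on $k$.

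The main technical obstacle is the size equivalence $|L_n|\simeq |R_n|\simeq |J_n|$ (which gives $|X_n|\simeq |J_n|\simeq |V|$) together with the localization $X_n\subset V^*$. The upper bound $|L_n|,|R_n|\lesssim |J_n|$ relies on property (2): the B-spline support $F\supset J_n$ has $|F|\lesssim |J_n|$, consists of at most $k$ consecutive atoms of $\mathscr F_n$, and (by the construction of $J_n$ in \cite{Passenbrunner2014}) both $L_n$ and $R_n$ lie among these atoms. The lower bound $|L_n|\wedge |R_n|\gtrsim |J_n|$ would follow from property (4), $\|f_n\|_\infty\simeq |J_n|^{-1/2}$, combined with the $L^p$-stability of the B-spline basis (Proposition~\ref{prop:lpstab}): the coefficient of $f_n$ with respect to the ``new'' B-spline associated to the knot between $L_n$ and $R_n$ is $\gtrsim (|L_n|\wedge |R_n|)^{-1/2}$, and this would force $\|f_n\|_\infty\gg |J_n|^{-1/2}$ unless $|L_n|\wedge |R_n|\gtrsim |J_n|$. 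The inclusion $X_n\subset V^*$ is then a consequence of $J_n\subset V$, property (1), and $|L_n|,|R_n|\lesssim |V|$.

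For the counting, distinct $n_1, n_2\in T$ yield distinct atoms $X_{n_1},X_{n_2}$ (different steps split different atoms), and the filtration structure forces either $X_{n_2}\subsetneq X_{n_1}$ or $X_{n_2}\cap X_{n_1}=\emptyset$ whenever $n_1<n_2$ (the reverse inclusion is impossible since $X_{n_1}$ is no longer an atom after step $n_1$). So $\{X_n : n\in T\}$ is laminar in $V^*$. At each level of the associated forest, the nodes are pairwise disjoint subsets of $V^*$ of size $\gtrsim |V|$, hence at most $|V^*|/(c|V|)\lesssim 1$ in number. Along any chain $X_{n_1}\supsetneq\cdots\supsetneq X_{n_d}$, the ``sibling'' $S_i\in\{L_{n_i},R_{n_i}\}$ of $X_{n_{i+1}}$ in the split of $X_{n_i}$ satisfies $S_i\subset X_{n_i}$, $|S_i|\gtrsim |V|$, and $S_i\cap X_{n_{i+1}}=\emptyset$; for different $i$ these siblings are pairwise disjoint subsets of $X_{n_1}$, so summing $\sum_{i=1}^{d-1}|S_i|\leq |X_{n_1}|\lesssim |V|$ forces $d\lesssim 1$. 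Combining the width and depth bounds yields $|T|\lesssim 1$, as required.
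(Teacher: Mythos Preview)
The paper does not itself prove this lemma; it is quoted from \cite{Passenbrunner2014}, so there is no in-paper argument to compare against. Assessing your proposal on its own: the central claim --- that both $L_n$ and $R_n$ lie among the atoms of the B-spline support $F$ from property~(2), giving $|L_n|,|R_n|\lesssim |J_n|$ and hence $X_n\subset V^*$ --- is false in general. For $k=2$, take a single atom $[0,M]$ with $M$ large and split it at $1$, so $L_n=[0,1]$, $R_n=[1,M]$. The orthonormal function $f_n$ (piecewise linear on $[0,M]$ with a kink at $1$, orthogonal to affine functions) satisfies $\|f_n\|_\infty\simeq 1$, so property~(4) forces $|J_n|\simeq 1$ and thus $J_n=[0,1]$. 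The only order-$2$ B-spline support $F\supset J_n$ in $\mathscr F_n$ with $|F|\lesssim|J_n|$ is $F=[0,1]$; it contains $L_n$ but not $R_n$, and $|R_n|=M-1$ is unbounded relative to $|J_n|$. With $V=[0,2]$ one has $n\in T$, yet $X_n=[0,M]$ lies in no bounded enlargement of $V$. The same phenomenon occurs for every $k\geq 1$ whenever a large atom is split very asymmetrically. This breaks the localization $X_n\subset V^*$ and with it the width bound in your forest count; the argument does not go through as written.

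Your lower-bound step is also underspecified: inserting one knot does not single out one ``new'' B-spline (there are $k$ B-splines in $\mathscr F_n$ whose supports contain $t_n$, replacing $k-1$ from $\mathscr F_{n-1}$), and you give no argument for the asserted size $\gtrsim(|L_n|\wedge|R_n|)^{-1/2}$ of any particular coefficient of $f_n$.
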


\subsection{Tensor product splines}
\label{sec:tensor}
Let $d$ be a positive integer and let for any coordinate $\delta \in
\{1,\ldots,d\}$ the sequence $(\mathscr F_n^{\delta})$ be an interval filtration 
on the bounded interval $I^{\delta}$, generated by
the intervals $(I_{n,i}^{\delta})_i$. Put $I = I^1\times \cdots\times I^d$.
If $\mathscr F_n = \mathscr F_n^1\otimes \cdots\otimes \mathscr F_n^d$, then
the sequence $(\mathscr F_n)$ is called an \emph{interval filtration} on the
$d$-dimensional rectangle $I$.
Every $\sigma$-algebra $\mathscr F_n$ is generated by the finite, mutually
disjoint family 
$\{ I_{n,i} : i\in \Lambda \}$, $\Lambda\subset \mathbb Z^d$, of 
$d$-dimensional rectangles given by $I_{n,i} = \prod_{\ell=1}^d
I_{n,i_\delta}^\delta$ for $i\in \Lambda$.
We assume that
$\Lambda$ is of the form $\Lambda^1\times \cdots\times \Lambda^d$ where for each
$\ell=1,\ldots,d$, $\Lambda^\ell$ is a finite set of consecutive integers and the
rectangles $I_{n,i}$ have the property that they are ordered in the same way as
$\mathbb R^d$, i.e., if $i,j\in\Lambda$ with $i_\ell < j_\ell$ then the
projection of $I_{n,i}$ onto the $\ell$th coordinate axis lies to the left of the
projection of $I_{n,j}$ onto the $\ell$th coordinate axis.
For $x\in I$, let $A_n(x)$ be the uniquely determined atom (rectangle)  $A\in\mathscr
F_n$ so that $x\in A$. For two atoms $A,B\in \mathscr F_n$, define
$d_n(A,B) := d_{\mathscr F_n}(A,B):= j-i\in\mathbb Z^d$ if $A=I_{n,i}$ and $B=I_{n,j}$. For
$s\in\mathbb Z^d$, we put $|s|_1 = \sum_{j=1}^d |s_j|$.

For each $\ell=1,\ldots,d$, let $k_\ell$ be a positive integer.
Define the tensor product spline space of order $k= (k_1,\ldots,k_d)$ associated to $\mathscr
F_n$ as
\[
	S_k(\mathscr F_n) := S_{k_1}(\mathscr F_n^{1}) \otimes \cdots \otimes
	S_{k_d}(\mathscr F_n^{d}). 
\]
The space $S_k(\mathscr F_n)$ admits the tensor product B-spline basis
$(N_{n,i})_{i}$ defined by
\[
	N_{n,i} = N_{n,i_1}^1\otimes \cdots \otimes N_{n,i_d}^d,
\]
where $(N_{n,i_\ell}^\ell)_{i_\ell}$ denotes the B-spline basis of
$S_{k_\ell}(\mathscr F_n^{\ell})$ that forms a partition of unity.
The support $F_{n,i}$  of
$N_{n,i}$ is composed of at most $k_1\cdots k_d$ neighouring atoms of $\mathscr
F_n$. 
Consider the orthogonal projection operator $P_n$ onto $S_k(\mathscr F_n)$ with respect to
the $d$-dimensional Lebesgue measure.
A direct consequence of Shadrin's theorem \ref{thm:shadrin} and using its tensor
product structure is that $P_n$ is uniformly bounded on $L^\infty(I)$ (and
therefore also on $L^1(I)$).
Using the B-spline basis and its biorthogonal system $(N_{n,i}^*)$, 
the orthogonal projector $P_n$ is given by
\begin{equation}\label{eq:rep_Pn}
	P_n f =\sum_i \int_I f(x)N_{n,i}(x)\dif x \cdot  N_{n,i}^{*},\qquad
	f\in L^1(I).
\end{equation}

In the following, the symbols  $\lesssim,\gtrsim,\simeq$  are used with
the same meaning as before, but note that the dependence of the constants on the
parameter $k=(k_1,\ldots,k_d)$ also includes an implicit dependence on the
dimension $d$.

The dual B-spline functions $N_{n,i}^*$ admit the following crucial geometric decay estimate 
\begin{equation}	
	\label{eq:mainestimate}
	| N_{n,i}^*(x) | \lesssim \frac{ q^{|d_n(K_{n,i}, A_n(x))|_1} }{ |\conv
		(F_{n,i},
	A_n(x))|}, \qquad x\in I
\end{equation}
for some constant $q\in [0,1)$ that depends only on  $k$, where $\conv(A,B)$
denotes the smallest, axis-parallel rectangle containing both sets  $A,B$ and
$K_{n,i}\subset F_{n,i}$ is an atom of $\mathscr F_n$ having maximal volume.
This inequality is a consequence of Theorem~\ref{thm:maintool} and
the fact that $N_{n,i}^*$ is the
tensor product of one-dimensional dual B-spline functions.
Inserting this estimate in formula \eqref{eq:rep_Pn} for $P_n f$ and as $F_{n,i}$ 
consists of
at most $k_1\cdots k_d$ neighbouring atoms of $\mathscr F_n$, setting $C_k :=
C(k_1\cdots k_d) q^{-|k|_1}$, we get the
pointwise estimate
\begin{equation}
	\label{eq:estPn}
	|P_n f(x)| \lesssim \sum_{A \text{ atom of }\mathscr F_n} 
		\frac{q^{|d_n(A,A_n(x))|_1}}{|\conv(A , A_n(x))|}\int_A
		|f(t)|\dif t,\qquad
		f\in L^1(I).
\end{equation}
Introducing the maximal function
\begin{equation}\label{eq:max_fct}
	\mathscr M f(x) = \sup_{n} \sum_{A\text{ atom of }\mathscr F_n}
	\frac{\rho^{|d_n(A,A_n(x))|_1}}{|\conv(A , A_n(x))|}\int_A
		|f(t)|\dif t,
	\qquad x\in I, f\in L^1(I)
\end{equation}
for some fixed parameter $\rho\in[0,1)$,
	we have the following Theorem \cite{Passenbrunner2021}. 
\begin{thm}\label{prop:maximal}
	The maximal function $\mathscr M$ is of weak type
	(1,1), i.e. there exists a constant $C$ depending only on the dimension
	$d$ and on the parameter $\rho<1$, so that we
	have the inequality
	\[
		| \{ \mathscr M f > \lambda \}| \leq  \frac{C}{\lambda}\|
			f\|_{L^1},\qquad \lambda>0,\ f\in L^1(I).
	\]
\end{thm}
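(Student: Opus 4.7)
The plan is a stopping-time / Calder\'on--Zygmund-type argument, modeled on Doob's maximal inequality but adapted to the non-local, geometric-decay kernel of $\mathscr M$. After replacing $f$ by $|f|$ and rescaling so that $\|f\|_{L^1}=1$, it suffices to prove $|\{\mathscr M f>\lambda\}|\lesssim 1/\lambda$ for every $\lambda>0$. Write $T_n f(x)$ for the summand inside the supremum in~\eqref{eq:max_fct}. The cornerstone for all subsequent bounds is the universal estimate
\[
\sum_{A\text{ atom of }\mathscr F_n}\frac{\rho^{|d_n(A,B)|_1}\,|A|}{|\conv(A,B)|}\lesssim 1,
\]
uniform over $n$ and over atoms $B$ of $\mathscr F_n$; it follows by grouping atoms $A$ by their shift $s=d_n(A,B)\in\mathbb Z^d$, using $|A|/|\conv(A,B)|\le 1$, and summing the geometric series $\sum_{s\in\mathbb Z^d}\rho^{|s|_1}<\infty$. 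An immediate consequence is that each $T_n$ is uniformly bounded on $L^\infty$ (take $B=A_n(x)$) and, by the symmetry of its kernel, also on $L^1$.

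Next I would introduce a maximal stopping-time family: let $\mathcal B_{\max}$ consist of the atoms $A$ of some $\mathscr F_{n_A}$ that are maximal under inclusion among those satisfying $T_{n_A}f|_A>\lambda$. Because atoms of a filtration are either nested or disjoint, $\mathcal B_{\max}$ is pairwise disjoint and
\[
\{\mathscr M f>\lambda\}\subseteq\bigcup_{A\in\mathcal B_{\max}}A.
\]
Since $T_{n_A}f$ is constant on $A$, the stopping condition $T_{n_A}f|_A>\lambda$ gives $|A|<\lambda^{-1}T_{n_A}f|_A\cdot|A|$, and a Fubini-type interchange yields
\[
\sum_{A\in\mathcal B_{\max}}|A|\le\frac1\lambda\int_I f(y)\,\Phi(y)\,dy,\qquad \Phi(y):=\sum_{A\in\mathcal B_{\max}}\frac{\rho^{|d_{n_A}(A,A_{n_A}(y))|_1}\,|A|}{|\conv(A,A_{n_A}(y))|}.
\]
The theorem thus reduces to the pointwise estimate $\Phi(y)\lesssim 1$ uniformly in $y\in I$.

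The bound on $\Phi$ is the main obstacle and the heart of the argument. A level-by-level use of the cornerstone inequality only gives $\int_I K_n(x,y)\,dx\lesssim 1$ for each single $n$, so summing naively across all levels diverges. To overcome this one must exploit two features simultaneously: (i)~the pairwise disjointness of $\mathcal B_{\max}$, and (ii)~its maximality, which forces every proper ancestor $A'\supsetneq A$ at level $m<n_A$ to satisfy $T_m f|_{A'}\le\lambda$. Combined with the tensor-product factorization of $|\conv(A,A_{n_A}(y))|$ across coordinates, one then telescopes the contributions to $\Phi(y)$ along disjoint descendant chains emanating from $y$: the decay $\rho^{|d_{n_A}|_1}$ is precisely what turns the otherwise-logarithmically-divergent across-levels sum into a convergent geometric series. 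Implementing this telescoping -- separately in the three regimes $A\supsetneq A_{n_A}(y)$, $A=A_{n_A}(y)$, and $A\cap A_{n_A}(y)=\emptyset$ -- is the technically delicate step of the proof.
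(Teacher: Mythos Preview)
First, a point of context: the paper does not prove Theorem~\ref{prop:maximal} here; it is simply quoted from \cite{Passenbrunner2021}. So there is no in-paper argument to compare your proposal against, and you are in effect being asked to reconstruct a proof from scratch.

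Your reduction to the quantity $\Phi$ is correct, but that is where the real work begins and where your proposal stops. You assert the pointwise bound $\Phi(y)\lesssim 1$ and say it will follow from disjointness together with maximality of $\mathcal B_{\max}$. The structural problem is that once you have passed to $\Phi$, both $f$ and $\lambda$ have disappeared: $\Phi(y)$ depends only on the family $\mathcal B_{\max}$, the chosen levels $n_A$, and the point $y$. The maximality information you want to exploit---that every proper ancestor $A'\supsetneq A$ satisfies $T_m f|_{A'}\le\lambda$---is a statement about $f$ and $\lambda$ and therefore cannot be applied to $\Phi$ directly. Either your reduction has discarded exactly what you need, or you must first extract from maximality a purely combinatorial constraint on which families $(\mathcal B_{\max},(n_A))$ can occur, and then prove $\Phi\lesssim 1$ only for those. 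You have done neither.

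Disjointness alone certainly fails. In $d=1$ with the dyadic filtration on $[0,1)$, take $A_k=[2^{-k},2^{1-k})$ at level $n_{A_k}=k$ for $k=1,\dots,N$; these are pairwise disjoint. For $y\in[0,2^{-N})$ one has $A_{n_{A_k}}(y)=[0,2^{-k})$, hence $|d_k(A_k,A_k(y))|=1$ and $|A_k|/|\conv(A_k,A_k(y))|=1/2$, giving $\Phi(y)\ge N\rho/2$. Every combinatorial distance here equals $1$, so the factor $\rho^{|d_{n_A}|_1}$ contributes no decay whatsoever across levels; your assertion that this decay ``turns the otherwise-logarithmically-divergent across-levels sum into a convergent geometric series'' is not borne out. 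Whether or not this exact configuration is realizable as a stopping family for some $f$, it shows that your sketched mechanism (disjointness plus the $\rho$-decay, organized by ``telescoping along descendant chains'') cannot be the whole story. A genuine proof must either rule such configurations out via the stopping condition and explain concretely how that constraint enters the estimate, or abandon the pointwise reduction to $\Phi$ and organize the argument differently---for instance by decomposing $T_n f$ according to the shift $s\in\mathbb Z^d$ \emph{before} taking the supremum over $n$, and proving a weak-type bound for each shifted maximal operator separately.
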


This theorem and estimate \eqref{eq:estPn} imply that for any $f\in L^1$, the
sequence of orthogonal projections $P_n f$ on the spline spaces $S_k(\mathscr
F_n)$ of the function $f$ converges almost everywhere with respect to
$d$-dimensional Lebesgue measure  (see also \cite{Passenbrunner2021}).

\section{Orthonormal tensor spline functions}\label{sec:tensor_orth}
	\label{sec:tensor_def}
In this section, we construct orthonormal spline functions based on a given
interval filtration $(\mathscr F_n)_{n\geq 0}$ on a $d$-dimensional rectangle
$I$ and based
on the parameter $k = (k_1,\ldots,k_d)$ containing the orders $k_\delta$ of the
polynomials in direction $\delta$.

For $n\geq
1$ assume that $\mathscr F_{n}$ is of the  form that $\mathscr
F_{n}=\mathscr F_{n}^{1} \otimes \cdots \otimes \mathscr F_{n}^d$ arises 
from $\mathscr F_{n-1} = \mathscr F_{n-1}^{1} \otimes \cdots \otimes \mathscr
F_{n-1}^d$
in the way that there exists a coordinate $\delta_0\in \{1,\ldots,d\}$ so that  
$\mathscr F_{n}^\delta = \mathscr F_{n-1}^\delta$ for $\delta\neq \delta_0$ and
$\mathscr F_{n}^{\delta_0}$ 
arises from $\mathscr F_{n-1}^{\delta_0}$ by splitting exactly one atom of $\mathscr
F_{n-1}^{\delta_0}$ into two atoms of $\mathscr F_{n}^{\delta_0}$.
If the sequence $(\mathscr F_n)$ of interval $\sigma$-algebras on a
$d$-dimensional rectangle $I$ has these properties, we say that $(\mathscr F_n)$ is
of \emph{standard form}.
Additionally, assume that $\mathscr F_0=\{\emptyset,I\}$.

Let $(f_{0,m})_{m=1}^{M_0}$ be an orthonormal basis of the space of polynomials $S_k(\mathscr
F_0)$ on $I$ that are of order $k_\delta$ in direction $1\leq \delta\leq d$.
Fix the index $n\geq 1$.
In the following construction of orthonormal spline functions
$(f_{n,m})_{m=1}^{M_n}$
 corresponding to the $\sigma$-algebra $\mathscr F_{n}$, we assume 
without restriction that $\delta_0=1$. For other values of $\delta_0$, the
construction proceeds similarly with obvious modifications.
Let $f$ be the $L^2$-normalized function that is 
contained in the space $S_{k_1}(\mathscr F_{n}^1)$ and 
orthogonal to $S_{k_1}(\mathscr F_{n-1}^1)$ and let $J$ be its corresponding
characteristic interval (cf. Section \ref{sec:franklin}).
For $j\geq 2$, let $(N_{n,i}^j)_{i\in\Lambda_n^j}$ be the B-spline basis of $S_{k_j}(\mathscr
F_n^j)$. 
We successively define index sets $\Omega_m^j\subset \Lambda_n^j$, functions $f_{n,m} = f\otimes
D_m^{2}\otimes\cdots\otimes D_m^{d}$ and corresponding characteristic intervals
$J_{n,m}^j$ for $j\geq 2$ and $1\leq m\leq \prod_{j=2}^d
|\Lambda_n^j|=:M_n$, which will be given by the following
inductive procedure on $m$.
Let $\pi$ be an arbitrary permutation of the set
$\{2,\ldots,d\}$, which is allowed to depend on the value of $n$.

If $m=1$, and for $j\geq 2$, we choose $\nu_j \in \Lambda_n^j$ arbitrarily and
set $\Omega_1^j = \{ \nu_j\}$.
Define
\[
	D_1^j = \frac{N_{n,\nu_j}^j }{\| N_{n,\nu_j}^j \|_2 },\qquad j\geq 2,
\]
and let $J_{n,1}^j$ be an atom of $\mathscr F_n^j$ contained in the
support of $N_{n,\nu_j}^j$ having maximal length.

Assume that $m\in \{2,\ldots, M_n\}$ and that $\Omega_\ell^j, D_\ell^j,
J_{n,\ell}^j$ are
defined for $\ell < m$ and $j\geq 2$.
Let $j_0=\max\{ \pi(j) : \Omega_{m-1}^j \subsetneq \Lambda_n^j \}$.
Then we distinguish three cases for the parameter $j$ according to the value of $\pi(j)$. 

\begin{enumerate}
	\item $\pi(j)= j_0$: 
		Choose $\mu_{j} \in \Lambda_n^{j}\setminus \Omega_{m-1}^{j}$ arbitrarily and
		set $\Omega_{m}^{j} = \Omega_{m-1}^{j} \cup \{ \mu_{j} \}$. Let
		$D_{m}^{j}$ be the $L^2$-normalized function contained in $\lin \{
			N_{n,\ell}^{j} :
			\ell\in \Omega_{m}^{j} \}$ that is orthogonal to $\lin \{
				N_{n,\ell}^{j} : \ell\in\Omega_{m-1}^{j}\}$. This function
			is uniquely given up to sign.
			Let $J_{n,m}^j$ be an  atom of $\mathscr F_n^j$
			contained in the support of $N_{n,\mu_j}^j$ with maximal
			length.
	\item $\pi(j)>j_0$: we choose $\mu_j \in \Lambda_n^j$
		arbitrarily, set 
		$\Omega_{m}^j = \{\mu_j\}$ and $D_{m}^j =  N_{n,\mu_j}^j /
		\| N_{n,\mu_j}^j \|_2$.
		Let $J_{n,m}^j$ be a largest atom of $\mathscr F_{n}^j$
		contained in the support of $N_{n,\mu_j}^j$.
	\item $\pi(j)<j_0$: we put $\Omega_{m}^j = \Omega_{m-1}^j$, $D_{m}^j =
		D_{m-1}^j$ and $J_{n,m}^j = J_{n,m-1}^j$.
\end{enumerate}
Then, for any $m\in \{1,\ldots,M_n\}$,  we define the \emph{characteristic
interval} $J_{n,m}$ of $f_{n,m}=f\otimes
D_m^{2}\otimes\cdots\otimes D_m^{d}$ by
\[
	J_{n,m} = J \times J_{n,m}^{2} \cdots \times J_{n,m}^d,
\]
which is an atom of $\mathscr F_n$. By construction, each atom of $\mathscr F_n$
appears at most $k_2\cdots k_d$ times among the sets $J_{n,m}$ for $1\leq m\leq
M_n$.

By the discussion in Section \ref{sec:discuss}, we know that for any $m$ and
$\pi(j)=j_0$, the function $D_{m}^j$ is a renormalization of the
function $N_{\nu_j}^{{\bf m}*}$ with ${\bf m} = \Omega_{m}^j$ and $\nu_j$ being the
only element in the set $\Omega_{m}^j \setminus \Omega_{m-1}^j$ (with the
understanding that $\Omega_0^j = \emptyset$).
Combining the estimates \eqref{eq:dual_estimate}, \eqref{eq:J1}, \eqref{eq:J2},
and \eqref{eq:estfranklin}, we obtain the following pointwise estimate for the
functions $f_{n,m}$:
\begin{equation}\label{eq:pw}
	| f_{n,m}(x) | \lesssim \frac{ q^{|d_n(A_n(x), J_{n,m})|_1}
	|J_{n,m}|^{1/2}}{|\conv(J_{n,m}, A_n(x)) |}, \qquad x\in I.
\end{equation}
The same estimates imply $\|f_{n,m}\|_p \simeq |J_{n,m}|^{1/p - 1/2}$ for $1\leq
p\leq\infty$.

By construction, the functions $(f_{n,m})$ are orthonormal in $L^2(I)$.
For arbitrary $n\geq 0$ and $0\leq m\leq M_n$, we denote by $P_{n,m}$ the orthogonal projection operator onto the span of 
\[
	S_k(\mathscr F_{n-1}) \cup \{ f_{n,\mu} : 1\leq \mu\leq m \},
\]
which we decompose as 
\begin{equation}\label{eq:Pnm}
		P_{n,m} f(x) = P_{n,0}f(x) + \sum_{\mu = 1}^m \langle f,
		f_{n,\mu}\rangle f_{n,\mu}.
	\end{equation}
	We now show that $P_{n,m}$ is uniformly bounded on $L^1$ by a constant
	depending only on $k$ (and therefore also on $d$). Indeed, the
	operator $P_{n,0}$ equals the operator $P_{n-1}$ in
	Section~\ref{sec:tensor}, for which we already know the uniform
	$L^1$-boundedness. This means, in order to estimate $\|P_{n,m} : L^1\to
	L^1\|$, we only have to estimate
\[
	\Big\| \sum_{\mu\leq m} \langle f,f_{n,\mu}\rangle f_{n,\mu}\Big\|_1
	\lesssim \sum_{\text{$A$ atom of $\mathscr F_n$}} \Big(\sum_{\mu\leq m}
	\frac{ |J_{n,\mu}| q^{|d_n(J_{n,\mu},A)|_1}} {|\conv(J_{n,\mu}, A)|}\Big)
	\int_A |f(t)|\dif t,
\]
which follows from \eqref{eq:pw}.
Since each atom of $\mathscr F_n$ occurs at most $k_1\cdots k_d$ times among the
sets $J_{n,\mu}$, $1\leq \mu\leq m$, the latter sum over $\mu$ is bounded by a
constant depending only on $k$, which already gives the claimed
uniform boundedness of $\|P_{n,m} : L^1 \to L^1\|$.

Estimate \eqref{eq:pw} also yields the following pointwise bound for $P_{n,m}f$
by the maximal function $\mathscr Mf$ introduced in \eqref{eq:max_fct}.

\begin{prop}\label{prop:proj_max}
	For any $n\geq 0$ and any $m\in \{1,\ldots,M_n\}$, we have the
	inequality 
\[
	|P_{n,m}f(x)| \lesssim \mathscr Mf(x),\qquad x\in I,
\]
for $\rho = q^{1/2}$ (with $q$ as in \eqref{eq:pw})  in the definition \eqref{eq:max_fct} of $\mathscr
M$.
\end{prop}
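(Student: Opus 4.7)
The plan is to exploit the decomposition \eqref{eq:Pnm}, $P_{n,m}f = P_{n,0}f + \sum_{\mu=1}^{m}\langle f,f_{n,\mu}\rangle f_{n,\mu}$, noting that $P_{n,0}$ coincides with the operator $P_{n-1}$ of Section~\ref{sec:tensor}. The first summand is dominated by $\mathscr Mf(x)$ immediately: the pointwise bound \eqref{eq:estPn} for $P_{n-1}f$ has exactly the shape of the sum defining $\mathscr Mf$, but with $q$ in place of $\rho$; since $q\in(0,1)$ yields $q^{a}\leq (q^{1/2})^{a}$ for $a\geq 0$, this already gives $|P_{n-1}f(x)|\lesssim \mathscr Mf(x)$ with $\rho=q^{1/2}$.

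For the $\mu$-sum I would apply the pointwise estimate \eqref{eq:pw} to \emph{both} factors $f_{n,\mu}(x)$ and $f_{n,\mu}(t)$ in $\langle f,f_{n,\mu}\rangle f_{n,\mu}(x)=\int f(t)f_{n,\mu}(t)\,dt\cdot f_{n,\mu}(x)$, then split the $t$-integral over atoms $A$ of $\mathscr F_n$ and swap the $\mu$- and $A$-sums. Because each atom of $\mathscr F_n$ occurs as a characteristic interval $J_{n,\mu}$ at most $k_2\cdots k_d$ times, the resulting inner sum over $\mu$ is majorised by a sum over atoms $B$ of $\mathscr F_n$ with bounded multiplicity. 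Writing $y=A_n(x)$, the task reduces to showing, for every atom $A$,
\[
\sum_B \frac{|B|\,q^{|d_n(A,B)|_1+|d_n(B,y)|_1}}{|\conv(A,B)|\,|\conv(B,y)|}\lesssim \frac{q^{|d_n(A,y)|_1/2}}{|\conv(A,y)|}.
\]

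Two ingredients carry this out. First, a volumetric inequality: for any three atoms $A,B,C$ of $\mathscr F_n$,
\[
|B|\,|\conv(A,C)|\leq |\conv(A,B)|\,|\conv(B,C)|.
\]
Because atoms are axis-parallel rectangles and both $|\cdot|$ and $\conv$ factor over coordinates, this reduces to the one-dimensional case for three intervals. In 1D, the union $\conv(A,B)\cup\conv(B,C)$ is an interval containing $\conv(A,C)$, and the two pieces overlap in at least $B$, so $|\conv(A,B)|+|\conv(B,C)|\geq |\conv(A,C)|+|B|$; setting $x=|\conv(A,B)|$, $y=|\conv(B,C)|$, $b=|B|$, $c=|\conv(A,C)|$, the inequalities $x,y\geq b$ give $(x-b)(y-b)\geq 0$, whence $xy\geq b(x+y)-b^{2}\geq b(c+b)-b^{2}=bc$. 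Second, a summation estimate: for all atoms $A,C$,
\[
\sum_{B} q^{|d_n(A,B)|_1+|d_n(B,C)|_1}\lesssim q^{|d_n(A,C)|_1/2},
\]
which factorises over the $d$ coordinates (using the product structure of the atoms) and in each coordinate reduces to the elementary bound $\sum_{j\in\mathbb Z}q^{|a-j|+|j-c|}\lesssim (|a-c|+1)q^{|a-c|}\lesssim q^{|a-c|/2}$.

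Combining the two ingredients yields the displayed target, and identifying the resulting sum over $A$ with the definition \eqref{eq:max_fct} of $\mathscr Mf$ at $\rho=q^{1/2}$ closes the argument. I expect the summation estimate to be the main obstacle: the linear prefactor $|a-c|+1$ coming from those $B$ lying between $A$ and $C$ must be absorbed into the geometric decay, at the cost of halving the exponent. This loss is precisely the reason the proposition is formulated with $\rho=q^{1/2}$ rather than $\rho=q$.
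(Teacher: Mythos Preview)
Your proof is correct and follows essentially the same route as the paper: both apply \eqref{eq:pw} to each factor, split the $t$-integral over atoms, use the same volumetric inequality $|B|\,|\conv(A,C)|\lesssim |\conv(A,B)|\,|\conv(B,C)|$ (the paper derives it from the additive bound $|\conv(A,C)|\leq |\conv(A,B)|+|\conv(B,C)|$ with a factor $2^d$, while your multiplicative argument gives constant $1$), and both sacrifice half the exponent to pass from $q$ to $\rho=q^{1/2}$. The only cosmetic difference is that the paper splits $q=\rho\cdot\rho$ \emph{before} summing over $\mu$---one factor of $\rho$ absorbs the triangle inequality and the other ensures $\sum_{\mu}\rho^{|d_n(J_{n,\mu},A)|_1}\lesssim 1$---whereas you sum first and then absorb the resulting linear prefactor; these are two sides of the same coin.
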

\begin{proof}
	Since we already know the desired bound for $P_{n,0}f(x)$ by
	\eqref{eq:estPn}, it suffices
	to consider the second term in equation \eqref{eq:Pnm}. Using estimate \eqref{eq:pw}, we obtain
	\begin{equation}
		\label{eq:mf}
	\begin{aligned}
		\Big|\sum_{\mu\leq m} \langle f,f_{n,\mu}\rangle
		f_{n,\mu}(x)\Big| \lesssim
		\sum_{\text{ $A$ atom of $\mathscr F_n$ }}
		\Big(\sum_{\mu\leq m}\frac{q^{|d_n(J_{n,\mu}, A_n(x))|_1 +
		|d_n(J_{n,\mu},A)|_1
	}|J_{n,\mu}|}{|\conv(J_{n,\mu},A) |\cdot |\conv(J_{n,\mu}, A_n(x)) |} \Big)
		\int_A |f(t)|\dif t.
	\end{aligned}
	\end{equation}
	Define $\rho = q^{1/2}$. For any atom $A$ of $\mathscr F_n$ and any
	index $\mu\leq m$ we
	have the inequalities $|d_n(A_n(x), A)|_1 \leq |d_n(J_{n,\mu}, A_n(x))|_1 +
	|d_n(J_{n,\mu},A)|_1$. Moreover, since for any coordinate $\delta$, we
	have $|\conv(A_n^\delta(x), A^\delta)| \leq |\conv(J_{n,\mu}^\delta,
	A_n^{\delta}(x))| + |\conv(J_{n,\mu}^\delta, A^\delta)|$, we also have
	the inequality
	\begin{align*}
		\frac{|J_{n,\mu}|}{|\conv(J_{n,\mu},A) |\cdot |\conv(J_{n,\mu},
		A_n(x)) |} 
		\leq 2^d\frac{1}{|\conv(A_n(x), A)|}.
	\end{align*}
	Inserting this in \eqref{eq:mf}, we obtain
	\begin{align*}
		\Big|\sum_{\mu\leq m} \langle f,f_{n,\mu}\rangle
	f_{n,\mu}(x)\Big| &\lesssim \sum_{\text{$A$ atom of $\mathscr F_n$}}
		\frac{\rho^{|d_n(A_n(x), A)|_1}}{|\conv(A_n(x), A)|} \Big(
		\sum_{\mu\leq m} \rho^{|d_n (J_{n,\mu}, A)|_1}\Big)
		\int_A |f(t)|\dif t \\
		&\lesssim \mathscr Mf(x),
	\end{align*}
	since each atom of $\mathscr F_n$ only occurs at most $k_1\cdots k_d$
	times among the sets $J_{n,\mu}$ for $1\leq \mu\leq m$.
\end{proof}
Combining this pointwise inequality for $P_{n,m}f$ and the weak type estimate
for the maximal function $\mathscr M f$ contained in Theorem \ref{prop:maximal}
yields -- as in \cite{Passenbrunner2021} --  that for every $f\in L^1(I)$, the series $\sum_{\ell} \langle
f,f_\ell\rangle f_\ell$
converges almost everywhere with respect to $d$-dimensional Lebesgue measure, if we use the
rearrangement $(f_\ell)$ of the functions $(f_{n,m})$ described in the following.

\subsection{Rearrangement of the functions $f_{n,m}$}\label{sec:rearr}
To each function $f_{n,m}$ we associate the $\sigma$-algebra $\mathscr F_{n}$
(so that $J_{n,m}$ is an atom of $\mathscr F_n$).
Now we enumerate the functions $(f_{n,m})$ as $(f_\ell)_{\ell\geq 0}$ according
to the lexicographic ordering on the pairs $(n,m)$. If $f_{n,m} = f_{\ell}$ for
some indices $n,m,\ell$, then we define the associated $\sigma$-algebra
$\mathscr A_\ell$ to the function $f_\ell$ by $\mathscr A_\ell = \mathscr
F_{n}$ and also we define the characteristic interval $J_\ell = J_{n,m}$
corresponding to the function $f_\ell$.
Observe that -- as opposed to the situation for $(\mathscr F_n)$ in standard
form -- two different values $\ell_1,\ell_2$ can give 
$\mathscr A_{\ell_1} = \mathscr A_{\ell_2}$ by this definition.

\subsection{(Quasi-)Dyadic extension of interval
$\sigma$-algebras}\label{sec:dyadic}
Let $\mathscr F$ be an interval $\sigma$-algebra  on a one-dimensional interval and let $(A_j)_{j=1}^m$ be an
enumeration of the intervals that are atoms of $\mathscr F$. For each $j=1,\ldots,m$, let $A_j =
L_j\cup R_j$ be a decomposition of $A_j$ into two disjoint intervals.
Define
\begin{equation}\label{eq:dyadic}
	\mathscr D_{1,\ell}(\mathscr F) = \sigma(\mathscr F, \{ L_j, R_j :
	1\leq j\leq \ell\}),\qquad \ell=1,\ldots,m 
\end{equation}
to be the $\sigma$-algebra generated by $\mathscr F$ and by the splitting of the
first $\ell$ atoms $A_j$ into the two intervals $L_j, R_j$.
For an integer $\nu\geq 1$, we define
inductively
\[
	\mathscr D_{\nu+1,\ell}(\mathscr F) = \mathscr D_{1,\ell} \big( \mathscr
	D_{\nu, 2^{\nu-1}m}(\mathscr F)\big),\qquad \ell = 1,\ldots,2^\nu m.
\]

Let $\mathscr A = \mathscr A^{1}\otimes \cdots \otimes \mathscr A^{d}$ be an
interval $\sigma$-algebra on a $d$-dimensional rectangle. 
A \emph{quasi-dyadic extension}  of $\mathscr A$ consists of an
interval filtration  $(\mathscr A_n)_{n\geq 0}$ in standard form with $\mathscr
A_0 = \mathscr A$ so that each $\mathscr
A_n$ equals 
\[
	\mathscr D_{\nu_1,\ell_1} (\mathscr A^{1}) \otimes \cdots \otimes
	\mathscr D_{\nu_d,\ell_d} (\mathscr A^{d})
\]
for some $(\nu_i,\ell_i)$
satisfying $|\nu_i - \nu_j| \leq 1$ for all $i,j=1,\ldots,d$. 

If the intervals $L_j$ and $R_j$ in \eqref{eq:dyadic} are chosen so that $|L_j|
= |R_j| = |A_j|/2$, a quasi-dyadic extension $(\mathscr A_n)$ of $\mathscr A$
will be called a \emph{dyadic extension} of $\mathscr A$.

\section{Unconditional convergence of multivariate orthogonal spline series}\label{sec:main}

Let $\mathscr F$ be an interval $\sigma$-algebra on a bounded interval $U$. Let
$(N_i)$ be the B-spline basis of $S_r(\mathscr F)$ for some positive integer
$r$. We say that a subset $A\subset U$ is a \emph{B-spline support of order $r$
in $\mathscr F$}, if $A$ is the support of one of the B-spline
functions $N_i$. Observe that $A$ is a union of at most $r$ neighbouring atoms of $\mathscr
F$.

\begin{defin}(Regularity)\label{def:regularity}
Let $(\mathscr F_n)$ be an interval filtration on $I$ with
$\mathscr F_n = \mathscr F_n^1 \otimes \cdots \otimes \mathscr F_n^d$ and let
$r=(r_1,\ldots,r_d)$ be a $d$-tuple of positive integers. 
\begin{enumerate}

\item
We say that $(\mathscr F_n)$ is
\emph{$r$-regular
with parameter $\gamma$}, if for all $\delta\in \{1,\ldots,d\}$, for all $n$ and
for any two B-spline supports $A,B$ of order $r_\delta$ in $\mathscr F_n^\delta$
with vanishing Euclidean distance, we have
\[
	\max \Big(\frac{|A|}{|B|}, \frac{|B|}{|A|} \Big) \leq \gamma.
\]

\item
We say that $(\mathscr F_n)$
is \emph{direction
$r$-regular with parameter $\beta$} if, 
for all $\delta\in
\{1,\ldots,d\}$,
 for all strictly decreasing sequences $(A_j)_{j=1}^\beta$ of atoms in 
 some $\mathscr F_{n_j}$, respectively,
and for all B-spline supports  
$B$ of order $r_\delta$ in $\mathscr F_{n_1}^{\delta}$ with
$A_{1}^{\delta}\subset B$, the set $B$ is not a B-spline support of order
$r_\delta$ in $\mathscr F_{n_\beta}^{\delta}$.
\end{enumerate}
\end{defin}

\begin{remark}We make a few comments on the definition of regularity and
	direction regularity.
\begin{enumerate}
	\item If $d=1$, for any interval filtration $(\mathscr
		F_n)$ and any choice of positive integer $r$, the filtration
		$(\mathscr F_n)$ is direction $r$-regular with parameter $r+1$.
	\item It is easily seen that if $(\mathscr F_n)$ is $r$-regular with
		parameter $\gamma$, then, for every $m$ with $m_i\geq r_i$ for
		every $i\in \{1,\ldots,d\}$, the filtration $(\mathscr F_n)$ is  
		$m$-regular for some parameter $\gamma'$. The same statement
		holds for direction regularity instead of regularity.
	\item Observe that regularity and direction regularity do not 
		assume any condition on the relative sidelengths of atoms of
		$\mathscr F_n$, we only have regularity  in every fixed 
	direction and direction regularity which basically says that we are not
	allowed to refine too often while neglecting a particular direction.
	\item If $(\mathscr F_n)$ is a quasidyadic interval filtration (meaning
		that $(\mathscr F_n)$ is a quasidyadic extension of
		the trivial $\sigma$-algebra on a $d$-dimensional rectangle), the sequence $(\mathscr
		F_n)$ is direction $(1,\ldots,1)$-regular for some parameter
		$\beta$.

		This observation also allows us to give examples of interval
		filtrations that are direction $(1,\ldots,1)$-regular, but not $r$-regular
		for any choice of $d$-tuples of integers~$r$.
	\item The notions of regularity and direction regularity are invariant
		under the rearrangement $(\mathscr A_n)$ of $(\mathscr F_n)$ described
		in Section~\ref{sec:rearr}.
\end{enumerate}
\end{remark}

Given an interval filtration $(\mathscr F_n)_{n\geq 0}$ in standard form on a $d$-dimensional rectangle $I$ with $\mathscr F_0 = \{\emptyset, I\}$
and a parameter
$k=(k_1,\ldots,k_d)$, we let $(f_n)$ be the sequence of orthonormal spline
functions constructed in Section~\ref{sec:tensor_def} in the order described in
Section~\ref{sec:rearr}.

\begin{thm}\label{thm:main}
Let $k=(k_1,\ldots,k_d)$ be a tuple of positive integers. 
Let $(\mathscr F_n)$ be an interval filtration in standard form on a $d$-dimensional
rectangle $I$ (with $\mathscr F_0 = \{\emptyset, I\}$) that is $k$-regular with parameter $\gamma$ and direction $k$-regular with parameter
	$\beta$. 

	Then, for all $f\in L^1$ and all signs $(\varepsilon_n)$,
	\[
		\Big| \Big\{ \sup_M \big| \sum_{n\leq M} \varepsilon_n \langle
		f,f_n\rangle f_n \big| >
	\lambda \Big\}\Big| \lesssim_{\gamma,\beta} \frac{\|f\|_1}{\lambda},\qquad \lambda>0.
	\]
\end{thm}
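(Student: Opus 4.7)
I follow the Calder\'on--Zygmund scheme used in \cite{KamontKeryan2021} for $d=1$ and adapt it to the tensor-product setting via Proposition~\ref{prop:proj_max} and the two regularity hypotheses. Fix $f\in L^1$, $\lambda>0$, and signs $(\varepsilon_n)$. Set $\Omega=\{\mathscr M f > c\lambda\}$ for a suitable absolute constant $c$; by Theorem~\ref{prop:maximal}, $|\Omega|\lesssim \|f\|_1/\lambda$. Walking through the filtration $(\mathscr A_\ell)$ of Section~\ref{sec:rearr}, I select maximal stopping atoms $B_i$ of $(\mathscr A_\ell)$ lying inside $\Omega$, and for each $i$ let $\ell_i$ be the first index at which $B_i$ is an atom. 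I then write $f=g+\sum_i b_i$ where $b_i$ is supported in $B_i$, has $\langle b_i,f_n\rangle=0$ for all $n<\ell_i$ (achieved by subtracting a local spline projection of $f$ on a small enlargement of $B_i$, whose $L^\infty$-boundedness follows from Shadrin's theorem), and $g$ satisfies $\|g\|_\infty\lesssim \lambda$ and $\sum_i\|b_i\|_1\lesssim \|f\|_1$.

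For the \emph{good part}, orthonormality gives $\big\|\sum_n\varepsilon_n\langle g,f_n\rangle f_n\big\|_2\le\|g\|_2$ and $\|g\|_2^2\le\|g\|_\infty\|g\|_1\lesssim \lambda\|f\|_1$. A Doob-type maximal inequality applied to the $L^2$-bounded sequence of partial sums (a martingale relative to an enlarged filtration compatible with the enumeration of $(f_n)$) then yields
\begin{equation*}
\Big|\Big\{\sup_M\Big|\sum_{n\le M}\varepsilon_n\langle g,f_n\rangle f_n\Big|>\lambda/2\Big\}\Big|\lesssim \|f\|_1/\lambda.
\end{equation*}

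For the \emph{bad part}, define $B_i^*$ as the union of $B_i$ with every B-spline support of order $k_\delta$ in direction $\delta$ that touches $B_i^\delta$ in $\mathscr A_{\ell_i}^\delta$. By $k$-regularity, $|B_i^*|\lesssim_\gamma|B_i|$, hence $\big|\bigcup_i B_i^*\big|\lesssim_\gamma \|f\|_1/\lambda$. Outside $\bigcup_i B_i^*$ the required bound is
\begin{equation*}
\sum_i\int_{(B_i^*)^c}\sup_M\Big|\sum_{\ell_i\le n\le M}\varepsilon_n\langle b_i,f_n\rangle f_n(x)\Big|\dif x\lesssim_{\gamma,\beta}\|f\|_1,
\end{equation*}
after which Chebyshev finishes the proof. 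I would split the indices $n\ge\ell_i$ into those whose characteristic interval $J_n$ is \emph{close} to $B_i$ (finitely many, controlled by Lemma~\ref{lem:Jint} and $k$-regularity) and those \emph{far from} $B_i$. Close indices are bounded directly via $|\langle b_i,f_n\rangle|\le\|b_i\|_1\|f_n\|_\infty$ together with \eqref{eq:pw}; far indices acquire an extra geometric decay factor because $b_i$ is orthogonal to the local spline approximation of $f_n$ on $B_i^*$.

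The delicate point is the far-index sum: in the tensor setting, a stopping atom $B_i$ can survive many refinements in directions orthogonal to its active subdivision direction, producing characteristic intervals $J_n$ whose projections shrink indefinitely while still meeting $B_i^\delta$. Without direction $k$-regularity the resulting series need not converge. Direction $k$-regularity with parameter $\beta$ caps the length of any such descending chain in a single direction, while $k$-regularity with parameter $\gamma$ supplies the geometric comparability needed to convert \eqref{eq:pw} into a summable telescoping bound. Executing the multi-index summation that combines both effects, and tracking the dependence on $\gamma$ and $\beta$, is the technical heart of the argument.
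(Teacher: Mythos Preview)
Your Calder\'on--Zygmund architecture matches the paper's, but two points need correction.

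First, the good-part maximal estimate cannot come from a Doob inequality: for $k_\delta\ge 2$ the projections $P_M$ are not conditional expectations with respect to any $\sigma$-algebra (their ranges contain non-constant polynomials on atoms), so there is no ``enlarged filtration compatible with the enumeration of $(f_n)$'' making the partial sums a martingale. The paper instead uses the pointwise domination $|P_M u|\lesssim\mathscr M u$ from Proposition~\ref{prop:proj_max} and \eqref{eq:estPn}, combined with the $L^2$-boundedness of $\mathscr M$ (interpolated from Theorem~\ref{prop:maximal} and the trivial $L^\infty$ bound), to control $\|\sup_M|P_M(T_\varepsilon h)|\|_2$ by $\|h\|_2$. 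You cite Proposition~\ref{prop:proj_max} at the outset; use it here in place of Doob.

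Second, for the bad part you correctly locate the difficulty but omit the key device. The paper's engine is the combinatorial Lemma~\ref{lem:comb}: if $(C_n)_{n\in\Lambda}$ is a decreasing chain of atoms at \emph{fixed} displacement $s\in\mathbb Z^d$ from the characteristic intervals $J_n$, with the $J_n^\delta$ nested in a direction $\delta$ where $C_n^\delta$ and $J_n^\delta$ are separated by at least $k_\delta$ atoms, then $\card\Lambda\lesssim_{\gamma,\beta}\sum_{j\ne\delta}(1+|s_j|)$. Direction $k$-regularity enters precisely in this lemma. The point is that the bound is \emph{polynomial} in $|s|$, which survives multiplication by the geometric weight $q^{|s|_1}$ from \eqref{eq:pw}; your phrase ``caps the length of any such descending chain'' suggests a uniform bound, which is false and would not feed the two-case analysis ($J_n\subset L^c$ versus $J_n\subset L$, the latter further split by whether $|r_\delta|<|s_\delta|$) that the paper actually carries out. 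Two smaller repairs: the paper stops not on single atoms but on B-spline-support-sized blocks $E_j\in\mathscr C$ (unions of up to $3k_\delta$ atoms per direction with large boundary atoms, cf.\ \eqref{eq:bd}), which is what makes the local projection $Q_{E_j}$ well-behaved and resolves the vagueness of your ``small enlargement of $B_i$''; and the bound $|f|\lesssim\lambda$ on the complement of the stopping set requires the Remez inequality (Corollary~\ref{cor:remez}), since $f$ is a piecewise polynomial, not locally constant.
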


As a corollary we obtain, using the Marcinkiewicz interpolation theorem, that
the orthonormal system $(f_n)$ is an unconditional basic sequence in $L^p$ for $1<p<\infty$ under
the conditions on $(\mathscr F_n)$ stated in Theorem~\ref{thm:main}.

\subsection{Analysis of direction regularity}

\begin{lem}\label{lem:dir}Let $(\mathscr F_n)$ be an interval filtration
	on a $d$-dimensional rectangle $I$. Let $(\mathscr F_n)$ be  $k$-regular 
	 with parameter $\gamma$ and 
	direction $m$-regular with parameter $\beta$.

	For any direction $i=1,\ldots,d$, if $m_i > k_i$ then
	$(\mathscr F_n)$ is direction $m'$-regular for some parameter $\beta'$
	depending only on $k_i,\gamma$ and $\beta$ where $m' = m - e_i$ and
	$e_i=(0,\ldots,0,1,0,\ldots,0)$ is the canonical unit vector in
	direction $i$.
\end{lem}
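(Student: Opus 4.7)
The proof divides by the coordinate direction $\delta$ in the definition of direction regularity. For $\delta\neq i$, the identity $m'_\delta = m_\delta$ means that direction $m'$-regularity in direction $\delta$ is literally direction $m$-regularity in direction $\delta$, so these directions are handled by taking $\beta' = \beta$. All the content lies in the case $\delta = i$.

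For $\delta = i$, I plan to argue by contradiction: suppose there exist a strictly decreasing sequence $(A_j)_{j=1}^{\beta'}$ of atoms with $A_j \in \mathscr F_{n_j}$ and a B-spline support $B$ of order $m_i - 1$ in $\mathscr F_{n_1}^i$ with $A_1^i \subset B$ such that $B$ is still a B-spline support of order $m_i - 1$ in $\mathscr F_{n_{\beta'}}^i$. The fundamental observation is that the partition of $B$ in direction $i$ has at most $m_i - 1$ atoms at every intermediate time, so at most $m_i - 2$ elementary direction-$i$ refinement steps occur inside $B$ over $[n_1, n_{\beta'}]$.

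The plan is to iterate direction $m$-regularity against an extension $B^+_r := L_r \cup B$ of $B$ by a single neighbouring atom $L_r$ of $\mathscr F_{n_{j_r}}^i$ (say, on the left) at iteration $r \geq 1$, with $j_1 := 1$. Since $B$ has at most $m_i - 1$ atoms at $n_{j_r}$, the set $B^+_r$ is a B-spline support of order $m_i$ in $\mathscr F_{n_{j_r}}^i$ and contains $A_{j_r}^i \subset B$. Applying direction $m$-regularity with parameter $\beta$ to the strictly decreasing sub-sequence $(A_{j_r},\ldots,A_{j_r+\beta-1})$ together with $B^+_r$ forces $B^+_r$ to acquire more than $m_i$ atoms in $\mathscr F_{n_{j_r+\beta-1}}^i$. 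Combined with the bound $\leq m_i - 1$ on atoms of $B$, this forces $L_r$ to be strictly refined over $[n_{j_r}, n_{j_r+\beta-1}]$. Setting $j_{r+1} := j_r + \beta$ and letting $L_{r+1} \subsetneq L_r$ be the sub-atom of $L_r$ adjacent to $B$ in $\mathscr F_{n_{j_{r+1}}}^i$, the sizes $(|L_r|)$ decrease strictly across iterations.

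The role of $k$-regularity is to translate this accumulating refinement outside $B$ into direction-$i$ refinements inside $B$, which must eventually violate the budget of $m_i - 2$. Chaining $k$-regularity through overlapping B-spline supports of order $k_i$ that straddle the $L_r/B$ boundary, and exploiting the fact that $B$ has a bounded number of atoms so the chain has bounded length, yields a quantitative comparability of $|L_r|$ with the sizes of atoms at the left end of $B$'s partition at time $n_{j_r}$; thus as $|L_r|$ shrinks, the left end of $B$ must be refined in direction $i$. Each such refinement is itself constrained by $k$-regularity not to be arbitrarily uneven, which bounds the shrinkage factor per refinement in terms of $\gamma$. Balancing the strict decrease of $|L_r|$ against the cap of $m_i - 2$ refinements inside $B$ yields an upper bound on the iteration count $r$, and consequently $\beta' \lesssim_{\gamma,\beta,k_i} 1$. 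Boundary configurations—in particular when $B = I^i$ so that no adjacent $L_r$ exists—are handled by applying direction $m$-regularity directly to $I^i$ as a boundary order-$m_i$ B-spline support, which yields an immediate contradiction once $\beta' \geq \beta$. The main obstacle will be making the $k$-regularity chain quantitative: since for $k_i \geq 2$ the hypothesis bounds only sums of $k_i$ consecutive atoms rather than individual atom sizes, the translation from shrinkage of $|L_r|$ to forced refinements at a specific end of $B$ requires careful chaining that uses the bounded atom count of $B$.
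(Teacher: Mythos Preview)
Your iteration matches the paper's: extend $B$ to an order-$m_i$ B-spline support, apply direction $m$-regularity to force that extension to break within $\beta$ steps, pass to the adjacent sub-atom, and repeat. The divergence is in how you extract a bound on the number of iterations.

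Your mechanism---``shrinking $|L_r|$ forces direction-$i$ refinements inside $B$, exhausting the budget of $m_i-2$''---has a genuine gap for $k_i\ge 2$, and it is precisely the obstacle you flag but do not resolve. Chaining $k$-regularity through order-$k_i$ supports that straddle the $L_r/B$ boundary compares sums such as $|L_r|+|a_1|+\cdots+|a_{k_i-1}|$ with $|a_1|+\cdots+|a_{k_i}|$; the fixed atoms $a_j$ of $B$ contribute a positive floor to both sums, so these ratios stay bounded even as $|L_r|\to 0$. In other words, the comparability you claim between $|L_r|$ and the leftmost atom of $B$ simply does not follow from straddling supports, and hence no refinement inside $B$ is forced. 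A concrete test case: with $k_i=2$, $B=[0,1]$ partitioned as $[0,\tfrac12]\cup[\tfrac12,1]$, and $L_r=[-2^{-(r-1)},0]$, the pair $L_r\cup[0,\tfrac12]$ versus $[0,\tfrac12]\cup[\tfrac12,1]$ keeps ratio $\le 2$ for all $r$, so this particular comparison never obstructs $|L_r|\to 0$.

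The paper's route avoids this entirely. Writing $D_\ell=B_\ell\setminus B$ for the extra piece at iteration $\ell$, the key observation is that after $k_i$ iterations the discarded region $D_\ell\setminus D_{\ell+k_i}$ is an interval containing at least $k_i$ consecutive atoms, hence a full order-$k_i$ B-spline support lying \emph{entirely outside} $B$ and adjacent to $B_{\ell+k_i}\supset B$. Now $k$-regularity applies cleanly: $|B|\le |B_{\ell+k_i}|\le \gamma\,|D_\ell\setminus D_{\ell+k_i}|=\gamma(|D_\ell|-|D_{\ell+k_i}|)$. Combined with $|D_{\ell+k_i}|\le|B_{\ell+k_i}|$ this yields geometric decay $|D_{\ell+k_i}|\le\frac{\gamma}{1+\gamma}|D_\ell|$, while the same inequality gives the lower bound $|D_\ell|\ge|B|/\gamma$; together with $|D_1|\le\gamma|B|$ this bounds~$\ell$. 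No refinement inside $B$ is ever invoked. Your setup contains essentially all the pieces for this argument (your $L_r$ plays the role of $D_\ell$); the fix is to look $k_i$ iterations ahead and compare the \emph{accumulated} discarded piece with $B$, rather than trying to pin a single atom $L_r$ against individual atoms of~$B$.
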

\begin{proof}
	Let $i\in \{1,\ldots,d\}$ and $m$, $m'$ be as in the assumptions. We want to prove direction
$m'$-regularity of $(\mathscr F_n)$ for some parameter $\beta'$. Fix $\delta\in
\{1,\ldots,d\}$. If $\delta\neq i$, the condition for direction $m'$-regularity follows
directly from direction $m$-regularity. Thus, assume $\delta=i$ and assume that $(\mathscr
F_n)$ is not direction $m'$-regular with some parameter $C$. This means that there
exists an increasing sequence $n_1 < \cdots < n_C$ of indices and a strictly
decreasing sequence of sets $(A_j)_{j=1}^C$ so that $A_j$ is an atom in
$\mathscr F_{n_j}$ for all $j=1,\ldots,C$ and there exists a B-spline support $B\supset
A_1^\delta$ in $\mathscr F_{n_1}^{\delta}$ of order $m_\delta-1$ that is still  a B-spline
support of order $m_\delta-1$ in $\mathscr F_{n_C}^\delta$.
Define $\mathscr G_j = \mathscr F_{n_j}^\delta$ for $j=1,\ldots,C$.

Let $B_1$ be a B-spline support of order $m_\delta$ in $\mathscr G_1$
with $B_1 \supset B\supset A_1^\delta$. Since $(\mathscr F_n)$ is direction
$m$-regular with parameter $\beta$, we know that $B_1$ is not a B-spline support
of order $m_\delta$ in $\mathscr G_\beta$.
Let $B_2\supset B\supset A_1^\delta$ be a B-spline support of order $m_\delta$
in $\mathscr G_{\beta}$ with $B_2 \subset B_1$. Then we know that $B_2$ is not
a B-spline support of order $m_\delta$ in $\mathscr G_{2\beta}$.
Inductively, we get a strictly decreasing sequence $(B_\ell)$ so that $B_{\ell}$
is a B-spline support of order $m_\delta$ in $\mathscr G_{(\ell-1)\beta}$, but not
in $\mathscr G_{\ell\beta}$ and $B_\ell \supset
B\supset A_1^\delta$ for all $\ell$.
	Defining $D_\ell = B_\ell \setminus B$, we know that $D_{\ell}\setminus
	D_{\ell+k_\delta}$ contains
	a B-spline support of order $k_\delta$ in the $\sigma$-algebra $\mathscr
	G_{(\ell+k_\delta)\beta}$.
	By $k_\delta$-regularity of $(\mathscr G_j)$ in direction $\delta$,
	\begin{equation}\label{eq:D}
		|B|\leq |B_{\ell + k_\delta}|\leq \gamma | D_\ell \setminus
		D_{\ell+k_\delta} | = \gamma |D_\ell| - \gamma
		|D_{\ell+k_\delta}|\leq \gamma|D_\ell|,
	\end{equation}
	Using the estimate $|D_{\ell+k_\delta}|\leq |B_{\ell+k_\delta}|$, this also gives 
	\begin{equation}\label{eq:geom}
		|D_{\ell+k_\delta}|\leq \frac{\gamma}{1+\gamma}|D_\ell|.
	\end{equation}
	By $k_\delta$-regularity (since $B$ contains a
	B-spline support of order $k_\delta\leq m_\delta-1$ in every $\mathscr
	G_j$) we have
	$|D_1|\leq \gamma |B|$ and $|B|\leq \gamma |D_\ell|$ by \eqref{eq:D}.
	Those
	inequalities, together with the geometric decay \eqref{eq:geom}, are only possible
	if $\ell$ is bounded in terms of $\gamma$ and $k_\delta$, which implies
	an upper bound of $C$ in terms of $\gamma, k_\delta$ and $\beta$.
\end{proof}

By induction, this implies that if a $k$-regular filtration $(\mathscr F_n)$ is
direction $m$-regular with $m_i \geq k_i$ for all directions
$i\in\{1,\ldots,d\}$, we obtain that $(\mathscr F_n)$ is also direction
$k$-regular with different constants.

\begin{example}\label{ex:reg}
	On the other hand, we now discuss the possibility of a filtration $(\mathscr
F_n)$ that is $k$-regular and direction $k$-regular, but not direction
$m$-regular with $m_i\leq k_i$ for all $i\in\{1,\ldots,d\}$ and $m_\delta <
k_\delta$ for at least one direction $\delta$.

Fix $m_\delta=k_\delta-1$ and let $\varepsilon>0$ arbitrary. 
If the $\sigma$-algebra $\mathscr F(\varepsilon)$ on the interval $[-1,1)$
	is generated by the intervals $[-1, - \varepsilon),
	[\varepsilon, 1)$ and the $m_\delta$ intervals $[- \varepsilon +
		2j\varepsilon/m_\delta,  - \varepsilon +
		2(j+1)\varepsilon/m_\delta)$ for $j=0,\ldots,m_\delta-1$, the $k_\delta$-regularity
		parameter of the $\sigma$-algebra $\mathscr F(\varepsilon)$ is smaller than $2$
		and we can refine the two intervals $[-1,-\varepsilon)$ and
			$[\varepsilon,1)$ in $\mathscr F(\varepsilon)$ a number
				of at least $|\log \varepsilon|$ times without increasing
				the bound $2$ for the $k_\delta$-regularity
				parameter.
					
				This implies that we can give examples of
				interval filtrations $(\mathscr F_n)$ (that are
				$k$-regular and direction $k$-regular) on
				$d$-dimensional rectangles that are not
				$m$-regular and not
				$m$-direction regular by 
				using the $\sigma$-algebras $\mathscr F(1/\ell)$
				and its refinements described above for all
				positive integers $\ell$ in the construction of
				the filtration $(\mathscr F_n^\delta)$.
			\end{example}
			Example~\ref{ex:reg} and Lemma~\ref{lem:dir} explain
				the choice of the same order $k$ for
				regularity  and direction regularity in the
				formulation of Theorem~\ref{thm:main}. 

				If $(\mathscr F_n)$ is an interval filtration on
				a $d$-dimensional rectangle in standard form
				with $\mathscr F_0$ being the trivial
				$\sigma$-algebra, then we denote by $(\mathscr
				A_n)$ and $(J_n)$ its rearrangement and the
				sequence of characteristic intervals described
				in Section~\ref{sec:rearr}, respectively.

\begin{lem}\label{lem:comb} 
	Let $(\mathscr F_n)$ be an interval filtration on a $d$-dimensional
	rectangle $I$ in standard form (with $\mathscr F_0 = \{\emptyset, I\}$)
	that is $k$-regular 
	with parameter $\gamma$ and direction
	$k$-regular with parameter $\beta$.
	Let $(C_{n})_{n\in\Lambda}$ be a decreasing sequence of sets  with
	the following  properties.
	\begin{enumerate}
		\item $C_{n}$ is an atom of $\mathscr A_{n}$ for all
			$n\in\Lambda$.
		\item There exists $s\in\mathbb Z^d$ so that $d_{n}(C_{n},
			A_{n}) := d_{\mathscr
				A_{n}}(C_{n},
			J_{n}) = s$ for all $n\in\Lambda$.
		\item There exists a direction $\delta\in \{1,\ldots,d\}$ so
			that
			\begin{enumerate}
				\item $J_{n}^{\delta} \subseteq
					J_{m}^{\delta}$ for all $n,m\in\Lambda$
					with $n\geq m$,
				\item denoting $n_1=\min\Lambda$, there are at least $k_\delta$ atoms of 
					$\mathscr A_{n_1}^\delta$ between the
					sets $C_{n_1}^\delta$ and
					$J_{n_1}^\delta$.
			\end{enumerate}
	\end{enumerate}

 	Then, the cardinality $\card\Lambda$ of $\Lambda$ admits the bound
	\[
		\card\Lambda \lesssim_{\gamma,\beta}  \sum_{j\neq \delta}(1+ |s_j|).
	\]
\end{lem}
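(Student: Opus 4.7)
The plan is to bound $\card\Lambda$ by decomposing it according to which coordinate direction governs the change along the chain, and then count the admissible changes direction by direction.

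First, I would observe the ``frozen gap'': since $d_n^j(C_n, J_n) = s_j$ is constant along $n \in \Lambda$, no refinement of $\mathscr A_n^j$ strictly between $C_n^j$ and $J_n^j$ can occur for any $j$ (otherwise $|s_j|$ strictly increases). Thus for each $j$, the atoms of $\mathscr A_{n_1}^j$ strictly between $C_{n_1}^j$ and $J_{n_1}^j$ are preserved for all $n \in \Lambda$, and there are at most $|s_j|$ of them. Moreover, for fixed $(\mathscr A_n, C_n)$ the position $J_n$ is determined by $s$, and the construction in Section~\ref{sec:tensor_def} produces each atom as the characteristic interval $J_{n,m}$ of at most $k_2\cdots k_d$ functions, so every configuration $(\mathscr A_n, C_n, J_n)$ contributes $\lesssim 1$ indices of $\Lambda$.

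Next, for each direction $j \neq \delta$ I would bound the number of admissible configurations produced by direction-$j$ activity. Refinements in direction $j$ consistent with $s_j$ being preserved may only split $C_n^j$, split $J_n^j$, or split an atom outside the segment joining them; outside splits do not change $(C_n, J_n)$ as sets and are absorbed into the per-configuration multiplicity. A split of $C_n^j$ or $J_n^j$ halves that atom adjacent to a frozen gap atom; by $k$-regularity with parameter $\gamma$ applied to adjacent B-spline supports of order $k_j$, only $O_\gamma(1)$ consecutive such halvings can occur before the regularity ratio is violated. Summing this $O_\gamma(1)$ count over the at most $|s_j|$ frozen gap atoms together with the two endpoints $C_n^j$ and $J_n^j$ yields at most $\lesssim_\gamma 1 + |s_j|$ distinct configurations arising from direction $j$.

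For direction $\delta$, condition (3b) implies that $C_{n_1}^\delta$ and $J_{n_1}^\delta$ lie in disjoint B-spline supports of order $k_\delta$, and by (3a) the distinct values of $J_n^\delta$ form a nested chain inside $J_{n_1}^\delta$. Applying direction $k$-regularity with parameter $\beta$ to a strictly decreasing subchain of $(C_n)$, any B-spline support of order $k_\delta$ in $\mathscr A_{n_1}^\delta$ containing $C_{n_1}^\delta$ is refined in direction $\delta$ after $\beta$ strict decreases; iterating and using the $k_\delta$-atom separation (which prevents such refinement from spilling into the region near $J_n^\delta$) forces the total number of direction-$\delta$ configurations to be bounded by a constant depending only on $k,\gamma,\beta$. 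Combining the direction-wise counts with the per-configuration multiplicity yields $\card\Lambda \lesssim_{\gamma,\beta} \sum_{j \neq \delta}(1 + |s_j|)$. The hard part will be the direction-$\delta$ analysis: bridging the combinatorial statement of direction $k$-regularity (triggered by strictly decreasing chains of $d$-dimensional atoms) with the one-dimensional nested structure of $J_n^\delta$ from (3a) and the separation (3b), and verifying that refinements outside the gap are always absorbed by the per-configuration multiplicity rather than inflating the direction-$\delta$ budget.
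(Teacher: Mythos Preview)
Your decomposition by coordinate direction is the right framework, but you have assigned the two regularity hypotheses to the wrong directions, and this creates a genuine gap.

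For $j\neq\delta$ you try to use only $k$-regularity, arguing that repeated splits of $C_n^j$ or $J_n^j$ adjacent to a ``frozen gap atom'' eventually violate the ratio bound. This works only when $|s_j|\geq k_j$, so that the gap actually contains a B-spline support of order $k_j$ to compare against. When $|s_j|$ is small---in particular when $s_j=0$, so $C_n^j=J_n^j$---there is no frozen B-spline support in direction $j$, and nothing in $k_j$-regularity alone prevents $C_n^j$ from shrinking arbitrarily many times (take dyadic refinement in direction $j$). Yet the target bound is $\lesssim 1$ in this case. The paper handles $j\neq\delta$ with \emph{direction} $k$-regularity, applied not in direction $j$ but in direction $\delta$: condition~(3b) guarantees a B-spline support $\Delta$ of order $k_\delta$ sitting in the frozen $\delta$-gap, adjacent to $J_{n_1}^\delta$, which persists in every $\mathscr A_n^\delta$. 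If direction $j$ were refined more than $2^\beta(1+|s_j|)$ times inside $\conv(J_{n_1}^j,C_{n_1}^j)$, one could extract a strictly decreasing chain of $d$-dimensional atoms of length $\geq\beta$ whose $\delta$-component lies in $\Delta$, contradicting direction $k$-regularity.

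Conversely, for direction $\delta$ you invoke direction $k$-regularity and sketch an iteration that is at best murky: strict decreases of the $d$-dimensional atoms $C_n$ need not correspond to $\delta$-refinements, so it is unclear how the iteration terminates in $O_{\gamma,\beta}(1)$ steps. The paper's argument here is much simpler and uses only $k$-regularity together with~(3a): since $\Delta$ is adjacent to each $J_n^\delta$ and contains a B-spline support of order $k_\delta$, $k_\delta$-regularity forces $|J_n^\delta|\simeq_\gamma|\Delta|$ for all $n\in\Lambda$; as the $J_n^\delta$ are nested by~(3a), Lemma~\ref{lem:Jint} bounds the number of distinct $J_n^\delta$ (hence the number of $\delta$-refinements) by a constant depending only on $k_\delta,\gamma$.
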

\begin{proof}
	Assume without restriction that the sequence of
	$\sigma$-algebras $(\mathscr A_{n})_{n\in\Lambda}$ is strictly increasing.
	This can be done, since we know that $(C_{n})$ is decreasing, which
	means that if $n_i < \cdots <n_{i+m}$ with
	$n_i,\ldots,n_{i+m}\in\Lambda$ and $\mathscr A_{n_i} = \cdots
	=\mathscr A_{n_{i+m}}$ we know that $C_{n_{i+m}} = \cdots =C_{n_i}$. But
	also $d_{n_{i+\ell}}(J_{n_{i+\ell}}, C_{n_{i+\ell}}) = s$ is constant
	for all $\ell =1,\ldots,m$ which implies that $J_{n_{i+m}} = \cdots =
	J_{n_i}$.  By construction of the intervals $J_n$ in
	Section~\ref{sec:tensor_def}, we get that $m\leq
k_1\cdots k_d$.

Using the notation $d_{n}^\delta = d_{ \mathscr A_{n}^\delta}$, we obtain (by
(2) and (3b))
\[
	|s_\delta| = |d_{n}^\delta( C_{n}^\delta, J_{n}^\delta)| \geq
	k_\delta+1,\qquad
	n\in \Lambda,
\]
which means that there exists a set $\Delta$ between $C_{n_1}^\delta$ and
$J_{n_1}^\delta$ that is a B-spline support of order
$k_\delta$ in all $\sigma$-algebras $\mathscr A_{n}^{\delta}$ for $n\in\Lambda$
and so that the Euclidean distance between $\Delta$ and $J_{n_1}^\delta$ is zero. 
This also implies that for all $n\in\Lambda$, the Euclidean distance between
$\Delta$ and $J_{n}^\delta$ is zero.
Since $J_{n}^\delta$ is the largest
atom in $\mathscr A_n^\delta$ contained in some B-spline support of order
$k_\delta$ in $\mathscr A_{n}^\delta$, we know that, by
$k_\delta$-regularity of the
filtration $(\mathscr A_{n}^\delta)$,
\begin{equation}\label{eq:Jdelta}
	(k_\delta\gamma)^{-1} |\Delta| \leq| J_{n}^\delta	| \leq \gamma
	|\Delta|,\qquad n\in\Lambda.
\end{equation}
We split the index set $\Lambda$ into the (not mutually disjoint)
	subsets
	\[
		\Gamma_i = \{ n\in\Lambda :  \mathscr A_{n}^i \neq \mathscr
		A_{m}^i \text{ for all $m\in\Lambda, m<n$} \},\qquad
		i\in\{1,\ldots,d\}.
	\]
	Every $n\in\Lambda$ is contained in some set $\Gamma_i$ since we assumed
	that $(\mathscr A_n)_{n\in\Lambda}$ is strictly increasing.
	Additionally, $n_1 = \min\Lambda\in \Gamma_i$ for every
	$i\in\{1,\ldots,d\}$.

First we count the indices in the set $\Gamma_\delta$.
By Lemma~\ref{lem:Jint},  inequality \eqref{eq:Jdelta} is only possible a
constant number of times $c_\delta$ depending
on $k_\delta,\gamma$, which implies 
$|\Gamma_\delta| \leq c_\delta$. 

Next, let $i=1,\ldots,d$ with $i\neq \delta$ arbitrary. 
For $n,m\in \Gamma_i$ with $m<n$ we assume that either $J_{n}^i$ is a strict
subset of $J_{m}^i$ or $J_{n}^i \cap J_{m}^i = \emptyset$. This can be
done without restriction since the case $J_{n}^i = J_{m}^i$, by Lemma~\ref{lem:Jint},
can increase the cardinality of $\Gamma_i$ only by a factor depending on $k_i$.
In both cases, there exists an atom of $\mathscr A_{m}^i$ that is not an atom
of $\mathscr A_{n}^i$ and is contained in the set $\conv( J_{n_1}^i,
C_{n_1}^i )$. The number of atoms of $\mathscr
A_{n_1}^i$ contained in $\conv( J_{n_1}^i,
C_{n_1}^i )$ is $1+|s_i|$. If we now assume that $|\Gamma_i| >
2^\beta(1+|s_i|)$, then there exists a strictly decreasing sequence
$(A_{n}^i)_{n\in\Omega}$
with $\Omega \subset \Gamma_i$ and $\card\Omega\geq
\beta$ so that $A_{n}^i$ is an atom of $\mathscr
A_{n}^i$ for all $n\in \Omega$. This implies that there exists a strictly decreasing
sequence of sets $(A_{n})_{n\in\Omega}$ so that 
$A_n$ is an atom of $\mathscr A_n$ and $A_n^\delta \subset \Delta$ for all
$n\in\Omega$.
Since $\Delta$ is a B-spline support of order $k_\delta$ in the $\sigma$-algebra
$\mathscr A_{n}^\delta$ for every $n\in\Omega$,
by direction $k$-regularity of $(\mathscr A_n)$ with parameter $\beta$, this is
not possible. Therefore, we have the inequality $|\Gamma_i| \leq 2^\beta
(1+|s_i|)$.
Collecting the estimates above, we obtain
\[
	\card \Lambda \leq \sum_{i=1}^d | \Gamma_i | \lesssim_{\gamma,\beta} \sum_{i\neq \delta}
	(1+|s_i|),
\]
which is the desired estimate.
\end{proof}

\section{ Proof of Theorem~\ref{thm:main} }\label{sec:proof_main}
This section contains the proof of Theorem~\ref{thm:main}.
Therefore, fix an interval filtration $(\mathscr F_n)_{n\geq 0}$ on a
$d$-dimensional rectangle $I$ in standard
form (with $\mathscr F_0 = \{\emptyset, I\}$) that is  $k$-regular with
parameter $\gamma$ and
and direction $k$-regular with parameter $\beta$ and let $(f_n)$ be the
orthonormal system described in
Section~\ref{sec:rearr}.
In order to prove Theorem~\ref{thm:main} it is enough to prove, for arbitrary
integers $N$, all sequences $(\varepsilon_n)_{n\leq N}$ of signs
and all functions  $f = \sum_{n\leq N} a_n f_n$ with  $T_\varepsilon f = \sum_{n\leq N}
	\varepsilon_n a_n f_n$, the following inequality:
	\begin{equation}\label{eq:wt11}
		| \{ \sup_{M\leq N} | P_M (T_\varepsilon f) | >
		\lambda \}| \lesssim_{\gamma,\beta}
		\frac{\|f\|_1}{\lambda},\qquad \lambda>0,
	\end{equation}
	where $P_M$ denotes the orthogonal projector onto $\lin (f_n)_{n\leq
	M}$, which, by the discussion before Proposition~\ref{prop:proj_max}, is
	uniformly bounded on $L^1$.
	
	Fix the index $N$, a function $f = \sum_{n\leq N} a_n f_n$ and a
	positive number $\lambda$. If we let the index $\ell$ be such that
	$\mathscr F_\ell$ is associated to the function $f_N$, we assume without
	restriction that $N$ is chosen sufficiently large so that the $\sigma$-algebra associated
	to $f_{N+1}$ is $\mathscr F_{\ell +1}$.
	We also assume without restriction that $(\mathscr F_n)$ is of the
	form that $(\mathscr F_n)_{n>\ell}$ is a dyadic extension of $\mathscr
	F_\ell$.
	Based on this interval filtration $(\mathscr F_n)$ we consider the
	associated interval filtration $(\mathscr A_n)$ defined in
	Section~\ref{sec:rearr} so that the function $f_n$ is associated to the
	$\sigma$-algebra $\mathscr A_n=\mathscr A_n^{1}\otimes \cdots\otimes
	\mathscr A_{n}^{d}$ for every index $n$.

\subsection{A maximal function}
Let $U_{n}^j$ for $1\leq j\leq d$ be a union of $\ell_j$ neighboring atoms
$A_{n,1},\ldots, A_{n,\ell_j}$ of $\mathscr A_n^{j}$ for $1\leq
\ell_j \leq 3k_j$ so that the lengths of the leftmost atom $A_{n,1}$ and the
rightmost atom $A_{n,\ell_j}$ are
comparable to the length of $U_{n}^j$, i.e.
\begin{equation}\label{eq:bd}
	\min ( |A_{n,1}|, |A_{n,\ell_j}| ) \gtrsim_\gamma |U_{n}^j|
\end{equation}
and so that $U_{n}^j$ contains at least one
B-spline support of order $k_j$ in $\mathscr A_n^{j}$.
Observe that if $S$ is a B-spline support of order $k_j$ in  $\mathscr
A_n^{j}$ then there exists such a set $U_{n}^j$ with $U_{n}^j\supset S$ by the $k_j$-regularity of the
$\sigma$-algebra $\mathscr A_n^{j}$.
Let $\mathscr C_n$ be the collection of all $U_n= U_{n}^1 \times \cdots
\times U_{n}^d$ arising in this way. Let $a(N)$ be a sufficiently large integer
so that for each atom $A$ of
$\mathscr A_N$ and for every $t\in A$ there exists a set $B\in \mathscr
C_{a(N)}$  with $t\in B\subset A$. (This is possible since $ (\mathscr
F_n)_{n\geq\ell}$
	is a dyadic extension of $\mathscr F_\ell$.)
Set $\mathscr C = \cup_{n\leq a(N)} \mathscr C_n$ and 
define the maximal function 
\[
	\mathscr M_{\mathscr C}u(x) = \sup_{B\in\mathscr C, x\in B} \frac{1}{|B|} \int_B
	|u(t)|\dif t.
\]
For $B\in \mathscr C$, define $n(B)$ to be the smallest index $n$ so that $B\in
\mathscr C_n$.

It can be seen that $\mathscr M_{\mathscr C}u(x) \lesssim \mathscr M u(x)$ with
the maximal function $\mathscr M$ defined in \eqref{eq:max_fct}.
Indeed, let $B\in \mathscr C_n$ with $x\in B$ be arbitrary. 
Then, we can divide $B$ into at most $(3k_1)\cdots (3k_d)$ atoms $V_j$ of
$\mathscr A_n$ satisfying 
$|d_n (A_n(x), V_j)|_1 \leq \sum_{\delta=1}^d 3k_\delta=:C$. Since $\conv(V_j\cup
A_n(x)) \subset B$ for any $j$, we estimate
\begin{align*}
	\frac{1}{|B|} \int_B |u(t)|\dif t &\leq \sum_{j} \frac{1}{|\conv(V_j\cup
	A_n(x))|} \int_{V_j} |u(t)|\dif t \\
	&\leq q^{-C}\sum_{j} \frac{q^{|d_n (V_j,
	A_n(x))|_1 }}{|\conv(V_j\cup
	A_n(x))|} \int_{V_j} |u(t)|\dif t \lesssim  \mathscr M u(x).
\end{align*}
Therefore, by Theorem~\ref{prop:maximal}, $\mathscr M_{\mathscr C}$ is of weak
type $(1,1)$ as well.

\subsection{Decomposition of $f$}
We prove inequality \eqref{eq:wt11}
by splitting  the function $f$ using the maximal function $\mathscr M_{\mathscr
C}f$.
Assume that $\|f\|_1 \leq \lambda |I|/2$ since otherwise inequality
\eqref{eq:wt11} is clear.
Define $G_\lambda = \{\mathscr M_{\mathscr C}f > \lambda\}$. For $x\in
G_\lambda$, let $B(x)\in \mathscr C$ be chosen so that
$x\in B(x)$ and with
\begin{equation}\label{eq:condlambda}
	\frac{1}{|B(x)|} \int_{B(x)} |f(t)|\dif t > \lambda
\end{equation}
and also so that for all strictly larger sets $\mathscr C\ni\widetilde{B}\supset B(x)$,
we have the opposite inequality
\[
	\frac{1}{|\widetilde{B}|}\int_{\widetilde{B}} |f(t)|\dif t \leq \lambda.
\]
Note that $B(x)\neq I$ for any $x\in G_\lambda$ because of the assumption
$\|f\|_1 \leq \lambda|I|/2$.
Then, the collection of all those sets
$\{ B(x) : x\in G_\lambda\}$
covers the set $G_\lambda$. Let $(E_j)_j$ be an enumeration of the finitely many different
sets in the collection $\{ B(x) : x\in G_\lambda\}$. Those sets are not
necessarily disjoint, but we will show that 
\begin{equation}\label{eq:charfun}
	\sum_j \charfun_{E_j} \lesssim 1.
\end{equation}
To see this, let $t\in I$ be arbitrary and
we divide the family $\Gamma(t)= \{E_j : t\in E_j\}$ into a number of subcollections.
First, let for $1\leq \ell_j\leq 3k_j$ and $\ell = (\ell_1,\ldots,\ell_d)$
\[
	\Gamma_{\ell}(t) = \{ E = E^1 \times \cdots \times E^d \in\Gamma(t) :
		E^\delta \text{ consists of
		$\ell_\delta$ atoms of $\mathscr A_{n(E)}^{\delta}$ for all
	$\delta$} \}.
\]
Next we divide according to which atom from left to right the point $t$
belongs to. For each choice of $\ell_1,\ldots,\ell_d$, and for each $1\leq
m_\delta
\leq \ell_\delta$ for $\delta=1,\ldots,d$ we define the collection of all sets $E\in
\Gamma_{\ell}(t)$ so that the point $t$ belongs to the $m_\delta$th
atom of $E^\delta$ from left to right as $\Gamma_{\ell,m}(t)$ writing $m=
(m_1,\ldots,m_d)$.
If two sets $E,F\in\Gamma(t)$ are contained in the same collection
$\Gamma_{\ell,m}(t)$, by the nestedness of the
$\sigma$-algebras $(\mathscr A_n)$ we must have that either $E$ is contained
in $F$ or vice versa. But since the sets $E,F$ are chosen
maximally under condition \eqref{eq:condlambda}, we must have $E=F$.
Therefore, each collection $\Gamma_{\ell,m}(t)$ 
only consists of at most one set and since the number of
collections $\Gamma_{\ell,m}(t)$ is bounded by some constant depending on
$k=(k_1,\ldots,k_d)$, we have
proven \eqref{eq:charfun}. 

Next we disjointify the collection $(E_j)$ and set
\[
	V_j = E_j \setminus \bigcup_{i<j} E_i.
\]
Obviously $(V_j)$ consists of disjoint sets, $\cup_j V_j = G_\lambda$
and $V_j\subset E_j$ for each $j$.
Based upon this disjoint decomposition of $G_\lambda$, we split the function
$f$ into the following parts:
\begin{align} \label{eq:hdef}
	h &= f \cdot \charfun_{G_\lambda^c} + \sum_{j}
	Q_{E_j}(f\charfun_{V_j}),\\ 
	\label{eq:gdef}
	g &= f-h = \sum_{j} \big(f\charfun_{V_j} - Q_{E_j}(f\charfun_{V_j})
	\big),
\end{align}
where the operator $Q_{E_j}$ is given as follows.
For fixed $j\geq 1$, we have  $E_j\in \mathscr C_n$ with $n= n(E_j)$.
Then, let $Q_{E_j}$ be the orthogonal projection operator
onto the spline space $S_k(\mathscr A_n \cap E_j)$.
Writing $T_\varepsilon g = \sum_{n\leq N} \varepsilon_n \langle g,f_n\rangle
f_n$ and $T_\varepsilon h = \sum_{n\leq N} \varepsilon_n \langle h,f_n\rangle
f_n$, we obtain
$T_\varepsilon f = T_\varepsilon h + T_\varepsilon g$ and thus
\begin{equation}\label{eq:splitf}
	| \{ \sup_{M\leq N} |P_M( T_\varepsilon f) | > \lambda \} | \leq  | \{
		\sup_{M\leq N} |P_M(T_\varepsilon h)| >
	\lambda/2 \} | + | \{ \sup_{M\leq N} |P_M(T_\varepsilon g)| > \lambda/2 \} |.
\end{equation}
\subsection{The function $h$} We start with the estimate
\begin{equation}\label{eq:h0}
	| \{ \sup_{M\leq N} |P_M(T_\varepsilon h)| > \lambda/2 \} | \leq
	\frac{4}{\lambda^2} \big\|
	\sup_{M\leq N} |P_M(T_\varepsilon h)|\big\|_2^2. 
\end{equation}
Since the maximal function of the operators $P_M$ is bounded uniformly on $L^2$ by
\eqref{eq:estPn} and Theorem~\ref{prop:maximal},  we estimate further
\begin{equation}\label{eq:h1}
	| \{ \sup_{M\leq N} |P_M(T_\varepsilon h)| > \lambda/2 \} | \lesssim
	\frac{4}{\lambda^2} \|
	T_\varepsilon h\|_2^2 \leq \frac{4}{\lambda^2}\| h\|_2^2,
\end{equation}
where the last equation follows from the orthogonality of the functions $(f_n)$. Therefore, it suffices to estimate
the $L^2$-norm of $h$.
We first estimate
$|f|$ pointwise a.e. on $G_\lambda^c$. Since $f\in S_N$ 
we let for $t\in G_\lambda^c$ 
the atom $A$ in $\mathscr A_N$ be such that
$t\in A$. 
By definition of $a(N)$ and $\mathscr C$, there exists $B
\in \mathscr C$ with $t\in B\subset
A$. Since $t\in G_\lambda^c$, we know that 
\[
	\frac{1}{|B|}\int_B |f(s)|\dif s \leq \lambda.
\]
Since $f$ is a polynomial on $B\subset A$, we invoke Remez' inequality
(Corollary~\ref{cor:remez}) to deduce
$|f(t)|\leq \|f\|_{L^\infty(B)} \lesssim
\lambda$. This argument shows that $|f|\lesssim \lambda$ a.e. on $G_\lambda^c$
and allows us to estimate further
\begin{equation}
	\label{eq:h}
\begin{aligned}
	\|h\|_2^2 &= \int_{G_\lambda^c} |f|^2 + \int \Big | \sum_{j}
	Q_{E_j}(f\charfun_{V_j})\Big|^2 \lesssim \lambda \int_{G_\lambda^c} |f| + \int
	\Big( \sum_{j}
	|Q_{E_j}(f\charfun_{V_j})|\Big)^2 \\
	&\lesssim \lambda\int_{G_\lambda^c} |f| +
	\sum_{j} \int | Q_{E_j}(f\charfun_{V_j}) |^2,
\end{aligned}
\end{equation}
where the last inequality follows from the fact that the non-negative function $u_j  :=
|Q_{E_j}(f\charfun_{V_j})|$ has support contained in $E_j$.
Indeed, let $t\in I$ and let  
$j_1(t),\ldots,j_m(t)$ be an enumeration of the indices $j$ so that $t$ is
contained in the support of $u_{j}$. By
\eqref{eq:charfun}, we know that $m\lesssim 1$. Therefore,
\begin{align*}
	\Big( \sum_j u_j(t) \Big)^2 = \big( u_{j_1}(t) + \cdots + u_{j_m}(t)
	\big)^2
	\lesssim \big( \max_\ell u_{j_\ell}(t) \big)^2 \leq \sum_{\ell}
	u_{j_\ell}(t)^2 \leq  \sum_j u_j(t)^2.
\end{align*}
Next, we will show that, for all $j$, we have the estimate 
\begin{equation}\label{eq:localproj}
	\int |Q_{E_j}(f\charfun_{V_j})|^2 \lesssim_\gamma \lambda^2 |E_j|.
\end{equation}
Indeed, setting $n=n(E_j)$, let $(N_i)$ be the B-spline basis of $S_k(\mathscr A_n\cap E_j)$ and denote by 
$(N_i^*)$ its dual basis. 
Since the linear span of $(N_i)$ is 
the range of the operator  $Q_{E_j}$,
\[
	Q_{E_j}(f\charfun_{V_j}) = \sum_i \langle f\charfun_{V_j}, N_i\rangle
	N_i^*.
\]
By the properties of the sets $E_j\in \mathscr C_n$ (in particular by the choice
\eqref{eq:bd} of boundary intervals) and the $k$-regularity of the filtration $(\mathscr A_n)$, we get that the dual functions $N_i^*$ satisfy
the estimate
\[
	|N_i^*(t)| \lesssim_\gamma \frac{1}{|E_j|},\qquad t\in E_j,
\]
by inequality \eqref{eq:mainestimate}. This implies
\begin{equation}
	\label{eq:est1}
	\begin{aligned}
		\int_{E_j} |Q_{E_j}(f\charfun_{V_j})|^2 &\lesssim  \sum_i |\langle
	f\charfun_{V_j}, N_i\rangle|^2 \int_{E_j} N_i^*(t)^2\dif t  \\
	&\lesssim_\gamma \sum_i \Big(
	\int_{E_j} |f| \Big)^2 \frac{1}{|E_j|} \lesssim \Big(
	\int_{E_j} |f|
	\Big)^2 \frac{1}{|E_j|}
\end{aligned}
\end{equation}
since the sum over $i$ only contains a constant number of terms (depending on
the order of the splines $k=(k_1,\ldots, k_d)$).
Recall $n=n(E_j)$. 
 Then, let $A\in\mathscr C_{n-1}$ so that $E_j\subset A$.
Observe that by definition of $n(E_j)$, $A=A^1\times \cdots \times A^d$ is a 
strict superset of $E_j=E_j^1\times \cdots \times E_j^d$ in the sense that there
exists precisely one coordinate $\delta=1,\ldots,d$ so that $A^\delta$ is a
strict superset of $E_j^\delta$. This means that one of the atoms of $\mathscr
A_n^\delta$ contained in $E_j^\delta$ is not an atom in $\mathscr
A_{n-1}^\delta$. Nevertheless, $A^\delta$ is a subset of the union of a constant
(depending on $k_\delta$) number of neighbouring B-spline supports in $\mathscr
A_n^\delta$, at least one of them being a subset of $E_j^\delta$. Therefore, by
$k_\delta$-regularity of the $\sigma$-algebra $\mathscr A_n^\delta$, we obtain
$|A^\delta|\lesssim_\gamma |E_j^\delta|$ and therefore, $|A| \lesssim_\gamma |E_j|$.
By the maximality of  $E_j$ under condition \eqref{eq:condlambda}, we infer 
\[
	\frac{1}{|E_j|} \int_{E_j} |f(t)|\dif t \lesssim_\gamma \frac{1}{|A|}\int_{A}
	|f(t)|\dif t \leq \lambda.
\]
Therefore, we continue the estimate in \eqref{eq:est1} and write
\[
	\int |Q_{E_j}(f\charfun_{V_j})|^2 \lesssim_\gamma \frac{1}{|E_j|}
	\Big( \int_{E_j} |f(t)|\dif t \Big)^2  
	 \lesssim_\gamma \lambda^2 |E_j|,
\]
which shows inequality \eqref{eq:localproj}.
Summing over $j$ yields 
\[
	\sum_{j} \int | Q_{E_j}(f\charfun_{V_j}) |^2 \lesssim_\gamma \lambda^2
	|G_\lambda|\lesssim \lambda \|f\|_1
\]
by inequality \eqref{eq:charfun} and 
the weak type estimate for the maximal function $\mathscr M_{\mathscr C}$. Inserting this
inequality in the estimate \eqref{eq:h} for the $L^2$ norm of the function $h$
yields
$\| h \|_2^2 \lesssim_\gamma\lambda\| f\|_1$,
which, together with \eqref{eq:h0} and \eqref{eq:h1}, gives the weak type estimate
\begin{equation}\label{eq:hfinal}
	| \{ \sup_{M\leq N} |P_M(T_\varepsilon h)| > \lambda/2 \}| \lesssim_\gamma \frac{\|f\|_1}{\lambda}.
\end{equation}

\subsection{The function $g$}
Let $j$ be arbitrary and let $n=n(E_j)$ and thus $E_j= E_j^{1}\times \cdots \times
E_j^{d}\in \mathscr C_n$. 
We know that for all $\delta=1,\ldots,d$, the set  $E_j^{\delta}$ is a union of $\ell_\delta$ atoms in $\mathscr
A_n^{\delta}$ for some $1\leq \ell_\delta\leq 3k_\delta$.
Then,
define $L_j^{\delta}$ and $H_j^{\delta}$ to be the union of at most $5k_\delta$ and
at most $7k_\delta$ neighboring atoms of
$\mathscr A_{n}^{\delta}$ respectively so that 
between $(H_j^\delta)^c$ and $L_j^\delta$ as well as between $(L_j^\delta)^c$
and $E_j^\delta$ are $k_\delta$ atoms of $\mathscr A_n^\delta$.
By $k$-regularity of $(\mathscr A_n)$, this implies that
the distance between $(L_j^{\delta})^c$
and $E_j^{\delta}$ as well as the distance between $(H_j^{\delta})^c$ and
$L_j^{\delta}$ is  $\gtrsim_\gamma |E_j^{\delta}|$ and, moreover,
$|H_j^\delta| \lesssim_\gamma |E_j^\delta|$.
Then, set $L_j = L_j^{1}\times \cdots \times L_j^{d}$ and $H_j =
H_j^{1}\times \cdots \times H_j^{d}$.
The letters $L$ and $H$ are chosen here to indicate that $L_j$ and $H_j$ are
large and huge  versions
of $E_j$, respectively.

Next, set $\widetilde{G_\lambda} = \cup_j H_j$. Then, we estimate the function
$g$ as follows:
\begin{equation}\label{eq:split_maximal}
	\big| \big\{ \sup_{M\leq N} |P_M(T_\varepsilon g)| > \lambda/2 \big\}
	\big| \leq |\widetilde{G_\lambda}| +
	\big|\big\{ t\in \widetilde{G_\lambda}^c : \sup_{M\leq N} |P_M(T_\varepsilon g)(t)| > \lambda/2
\big\}\big|,
\end{equation}
The term $|\widetilde{G_\lambda}|$ can be estimated by $|G_\lambda|$ if we
use inequality \eqref{eq:charfun}:
\begin{equation}\label{eq:esttilde}
	|\widetilde{G_\lambda}| \leq \sum_j |H_j| \lesssim_\gamma \sum_j |E_j| \lesssim
	|G_\lambda| \lesssim \frac{ \|f\|_1}{\lambda}.
\end{equation}
where the last inequality follows from the fact that the
maximal function $\mathscr M_{\mathscr C}$ is of weak type $(1,1)$.

Now we
come to the second term of \eqref{eq:split_maximal}, which we estimate as
follows:
\begin{equation}\label{eq:weakTg}
\begin{aligned}
	|\{ t\in \widetilde{G_\lambda}^c : \sup_{M\leq N} |P_M(T_\varepsilon g)(t)| > \lambda/2 \}| &\leq
	\frac{2}{\lambda} \Big\| \sup_{M\leq N} |P_M(T_\varepsilon g)| 
\Big\|_{L^1(\widetilde{G_\lambda}^c)} \\ &= \frac{2}{\lambda} \Big\|\sup_{M\leq
N} \big| \sum_j \sum_{n\leq M}
\varepsilon_n\langle g_j,f_n\rangle f_n \big|
\Big\|_{L^1(\widetilde{G_\lambda}^c)} \\
&\leq \frac{2}{\lambda} \sum_j \Big\| \sup_{M\leq N} \sum_{n\leq M} |\langle g_j,f_n\rangle| \cdot
|f_n|\Big\|_{L^1(\widetilde{G_\lambda}^c)} \\
&=\frac{2}{\lambda} \sum_j \Big\| \sum_{n\leq N} |\langle g_j,f_n\rangle| \cdot
|f_n| \Big\|_{L^1(\widetilde{G_\lambda}^c)}
\end{aligned}
\end{equation}
with $g_j = f\charfun_{V_j} - Q_{E_j}(f\charfun_{V_j})$.
We will, for fixed index $j$, show the inequality
\begin{equation}\label{eq:crucialtoshow}
\Big\| \sum_n |\langle g_j,f_n\rangle| \cdot
|f_n|\Big\|_{L^1(H_j^c)} \lesssim_{\gamma,\beta} \| g_j \|_{L^1(E_j)}.
\end{equation}
If we know \eqref{eq:crucialtoshow}, we can continue estimate \eqref{eq:weakTg},
since $\widetilde{G_\lambda}^c \subset H_j^c$ for any fixed index~$j$, and
therefore,
\begin{align*}
	|\{ t\in \widetilde{G_\lambda}^c : \sup_{M\leq N} |P_M(T_\varepsilon g)(t)| > \lambda/2 \}|
\lesssim_{\gamma,\beta}
\frac{1}{\lambda} \sum_j \| g_j \|_{L^1(E_j)}.
\end{align*}
By the uniform $L^1$ boundedness of the operator $Q_{E_j}$ (a consequence of
Shadrin's theorem~\ref{thm:shadrin}), we infer $\| g_j
\|_{L^1} \leq C \|f\|_{L^1(V_j)}$, which, together with the latter display and
the disjointness of the sets $(V_j)$,
gives us 
\[
	|\{ t\in \widetilde{G_\lambda}^c : \sup_{M\leq N} |P_M(T_\varepsilon g)(t)| > \lambda/2 \}|
	\lesssim_{\gamma,\beta} \frac{1}{\lambda} \| f\|_{L^1}.
\]
Combining this inequality with \eqref{eq:split_maximal}, \eqref{eq:hfinal},
\eqref{eq:esttilde}, and
\eqref{eq:splitf}  yields the conclusion of Theorem~\ref{thm:main}. Therefore,
we continue with the proof of \eqref{eq:crucialtoshow}.

Observe first that in order to show \eqref{eq:crucialtoshow}, we restrict the
summation to $n> n(E_j)$ where we recall that $n(E_j)$ is the 
smallest index $m$ so that $E_j \in \mathscr C_m$. This is possible, since for $n
\leq n(E_j)$, the function $f_n|_{E_j}$ is contained in the range of
the operator $Q_{E_j}$ (which is the spline space $S_k( E_j \cap \mathscr
A_{n(E_j)})$) and this implies
that $\langle g_j, f_n\rangle = 0$ due to the defining equation $g_j = f\charfun_{V_j} -
Q_{E_j}(f\charfun_{V_j})$.
We slightly change the language, fix the index $j$ and write $E= E_j$,
$L=L_j$, $H=H_j$ and $b$
for a generic function supported on $E$. Thus \eqref{eq:crucialtoshow} is
implied by the estimate
\[
	\Big\| \sum_{n> n(E)} |\langle b,f_n\rangle| \cdot
	|f_n|\Big\|_{L^1(H^c)} \lesssim_{\gamma,\beta} \|b\|_{L^1(E)}.
\]
Divide the index set $\{n> n(E)\}$ into the
two parts
\[
	\Gamma_1 = \{ n> n(E) : J_n \subset L^c \},\qquad
	\Gamma_2 = \{ n> n(E) : J_n \subset L \},
\]
where we recall that $J_n$ is the characteristic interval of the function $f_n$
defined in Section~\ref{sec:tensor_def}.
Since $L$ is a union of atoms in $\mathscr A_{n(E)}$ and $J_n$ (for $n>n(E)$) is
an atom in the finer $\sigma$-algebra $\mathscr F_n$,
we have $\{ n> n(E) \} =
\Gamma_1\cup \Gamma_2$ with a disjoint union.

\textsc{Case 1:} First consider the case $n\in\Gamma_1=\{ n> n(E) : J_n
\subset L^c \}$ and use the pointwise estimate \eqref{eq:pw} and its consequence
$\| f_n \|_{L^1} \lesssim |J_n|^{1/2}$ for the functions $f_n$ to deduce
(denoting $d_n = d_{\mathscr A_n}$)
\begin{equation}\label{eq:b_est}
\begin{aligned}
	\Big\| \sum_{n\in \Gamma_1} |\langle b,f_n\rangle| \cdot
	|f_n|\Big\|_{L^1(H^c)} &\leq \sum_{n\in\Gamma_1}
	|\langle b,f_n\rangle | \| f_n \|_{L^1} \\
	&\lesssim \sum_{n\in\Gamma_1} \sum_{\substack{\text{$B$ atom of
	$\mathscr A_n$:} \\
B\subset E}} 
	\frac{q^{|d_n(J_n,B)|_1} |J_n| } { |\conv(J_n, B)| }
\int_B |b(y)|\dif y \\
&= \sum_{s\in\mathbb Z^d} q^{|s|_1} \int_E\Big(\sum_{n\in\Gamma_1}\sum_{\substack{\text{$B$
atom of $\mathscr A_n$:} \\
	B\subset E,\ d_n(J_n,B) = s}}
 \frac{ |J_n|\charfun_B(y) } { |\conv(J_n, B)| }\Big)
 |b(y)|\dif y.
\end{aligned}
\end{equation}
Let $s\in \mathbb Z^d$ and $y\in E$ be fixed. Define  $\Gamma_{1,y,s}$ to be the set of all 
$n\in \Gamma_1$ so that there exists an atom $B$ of $\mathscr A_n$ with
$y\in B\subset E$ and $d_n(J_n,B) = s$. Since $s\in \mathbb Z^d$ is fixed, this atom $B$
is given uniquely by the index $n\in \Gamma_{1,y,s}$ and we denote
$B_n = B$.
Now, split the index set $\Gamma_{1,y,s}$ into the not necessarily disjoint union
\[
	\Gamma_{1,y,s} = \bigcup_{\delta=1}^d \Gamma_{1,y,s}^{\delta},\qquad
	\text{with }\Gamma_{1,y,s}^{\delta} = \{ n\in \Gamma_{1,y,s} : J_{n}^{\delta} \subset
(L^{\delta})^c \}.
\]
We fix the parameter $\delta \in \{1,\ldots,d\}$ 
and proceed to estimate the sum 
\begin{equation}\label{eq:sumtoestimate}
	\sum_{n\in\Gamma_{1,y,s}^{\delta}} \frac{|J_{n}|}{|\conv(J_{n},B_{n})|}
	\leq \sum_{n\in\Gamma_{1,y,s}^{\delta}}
	\frac{|J_{n}^\delta|}{|\conv(J_{n}^\delta,B_{n}^\delta)|}
	= 
	\sum_{n\in\Gamma_{1,y,s}^{\delta}} \int_{J_{n}^\delta}
	\frac{1}{|\conv(J_{n}^\delta,B_{n}^\delta)|}\dif t.
\end{equation}
Let $x$ be the endpoint of $E^\delta$ that is closest to the sets
$J_{n}^\delta$ for $n\in \Gamma_{1,y,s}^\delta$. Then, observe that
$|x-t| \leq |\conv(J_{n}^\delta, B_{n}^\delta)|$ for $t\in J_{n}^\delta$.
Recall that $J_{n}^\delta\subset (L^\delta)^c$ and
$B_{n}^\delta\subset E^\delta$ for all $n\in \Gamma_{1,y,s}^{\delta}$.
Therefore, $|x-t|\geq c |E^\delta|$ for $t\in J_{n}^\delta$ and some constant $c$
depending only on $k$ and $\gamma$.
Moreover, by $k_\delta$-regularity of the filtration in
direction $\delta$, we have $ |\conv(J_{n}^\delta,B_{n}^\delta)| \leq C
\gamma^{|s_\delta|}
|E^\delta|$ for some absolute constant $C$ (we can assume
without restriction that $\gamma \geq 2$).
Those estimates yield
\[
	c|E^\delta| \leq |x-t| \leq C\gamma^{|s_\delta|} |E^\delta|,
	\qquad t\in J_{n}^\delta.
\]
Let $\Lambda$ be a set of indices $n\in \Gamma_{1,y,s}^{\delta}$ so that for
$i,j\in\Lambda$ with $i < j$ we have $J_{i}^\delta \supseteq
J_{j}^\delta$.
We invoke Lemma \ref{lem:comb} with the setting $C_n = B_{n}$ for $n\in\Lambda$ to deduce that the
cardinality of $\Lambda$ is  $\lesssim_{\gamma,\beta} \sum_{j\neq \delta} (1+|s_j|)$.
Those observations imply
\begin{align*}
\sum_{n\in\Gamma_{1,y,s}^{\delta}} \int_{J_{n}^\delta}
	\frac{1}{|\conv(J_{n}^\delta,B_{n}^\delta)|}\dif t &\leq 
	\sum_{n\in\Gamma_{1,y,s}^{\delta}} \int_{J_{n}^\delta}
	\frac{1}{|x-t|}\dif t \\
	&\lesssim_{\gamma,\beta}
	\Big(\sum_{j\neq \delta} (1+|s_j|)\Big)
	\int_{c|E^\delta|}^{C\gamma^{|s_\delta|} |E^\delta|}
	\frac{\dif u}{u} \lesssim_{\gamma} |s_\delta|\cdot  \sum_{j\neq \delta} (1+|s_j|).
\end{align*}
Inserting this estimate, combined with \eqref{eq:sumtoestimate},
in the last line of \eqref{eq:b_est} and summing a geometric series, we obtain that
\[
\Big\| \sum_{n\in \Gamma_1} |\langle b,f_n\rangle| \cdot
|f_n|\Big\|_{L^1(H^c)}  \lesssim_{\gamma,\beta}  \int_E |b(t)|\dif t.
\]

\textsc{Case 2: }
Now we consider $n\in\Gamma_2 = \{ n> n(E) : J_n \subset L \}$.
Estimate \eqref{eq:pw}  with the notation $d_n = d_{\mathscr
A_n}$ gives 
\begin{equation}\label{eq:case2}
\begin{aligned}
	\Big\| \sum_{n\in \Gamma_2}& |\langle b,f_n\rangle| \cdot 
	|f_n|\Big\|_{L^1(H^c)} \leq \sum_{n\in\Gamma_2}
	|\langle b,f_n\rangle | \| f_n \|_{L^1(H^c)} \\
	&\leq \sum_{n\in\Gamma_2} \sum_{\substack{\text{$A,B$ atom of $\mathscr
	A_n$:} \\
	B\subset E,\  A\subset H^c}
	} 
	\frac{q^{|d_n(B,J_n)|_1 + |d_n(A,J_n)|_1} |J_n|\cdot |A| } { |\conv(J_n,
	B)|\cdot |\conv(J_n,A)| }
\int_B |b(y)|\dif y \\
&= \sum_{r,s\in\mathbb Z^d} q^{|r|_1 + |s|_1}\int_E \Big(\sum_{n\in\Gamma_2}\sum_{\substack{\text{$A,B$ atom of $\mathscr
	A_n$:} \\
	B\subset E,\  A\subset H^c, \\ d_n(A, J_n) = r,\ 
d_n(A,B) = s} }
	\frac{ |J_n|\cdot |A|\cdot \charfun_{B}(y) } { |\conv(J_n,
	B)|\cdot |\conv(J_n,A)| } \Big)
 |b(y)|\dif y.
\end{aligned}
\end{equation}
For fixed $r,s\in \mathbb Z^d$ and $y\in E$, we let $\Gamma_{2,y,(r,s)}$ be the set of
all $n\in \Gamma_2$ so that there exist two atoms $A,B$ of $\mathscr A_n$ with
$y\in B\subset E$, $A\subset H^c$ and $d_n(A,J_n) = r$, $d_n(A,B) = s$.
For fixed $r,s\in\mathbb Z^d$, those atoms $A,B$ are given uniquely by the index
$n\in \Gamma_{2,y,(r,s)}$ and are denoted by $A_n,B_n$ respectively.
Note that if $n\in \Gamma_{2,y,(r,s)}$ then we know that $d_n(J_n, B_n) =
s-r$.
Split the index set $\Gamma_{2,y,(r,s)}$ further into the (not necessarily disjoint) subcollections
\[
	\Gamma_{2,y,(r,s)}^{\delta} = \{ n\in \Gamma_{2,y,(r,s)} :
	A^{\delta}
	\subset (H^{\delta})^c \},\qquad
	\delta\in\{1,\ldots,d\}.
\]
Fixing the parameter $\delta\in \{1,\ldots,d\}$
we estimate the sum
\begin{equation}
	\label{eq:deltasplit}
	 \sum_{n\in
		 \Gamma_{2,y,(r,s)}^{\delta}} \frac{ |J_n| |A_n| }{ |
		\conv(J_n, B_n)| \cdot | \conv(J_n,A_n) |} \leq 
\sum_{n\in
	\Gamma_{2,y,(r,s)}^{\delta}} \frac{ |J_n^\delta| |A_n^\delta| }{ |
		\conv(J_n^\delta, B_n^\delta)| \cdot |
		\conv(J_n^\delta,A_n^\delta) |}.
	\end{equation}
Consider the indices of all maximal sets $A_n^\delta$ by setting
\[
	\Lambda = \{ n \in \Gamma_{2,y,(r,s)}^{\delta} : \text{ there is no
		$m\in \Gamma_{2,y,(r,s)}^{\delta}$ with $m < n$ and
		$A_m^\delta \supseteq A_n^\delta$} \}.
\]
Moreover, for $m\in\Lambda$, define
\[
	\Lambda_m = \{ n\in \Gamma_{2,y,(r,s)}^{\delta} : A_n^\delta
	\subseteq A_m^\delta\}
\]
and split the sum on the right hand side of \eqref{eq:deltasplit} into 
\begin{equation}\label{eq:deltamax}
	\sum_{m\in\Lambda} \sum_{n\in\Lambda_m }\frac{ |J_n^\delta|
		|A_n^\delta| }{ |
			\conv(J_n^\delta, B_n^\delta)| \cdot |
			\conv(J_n^\delta,A_n^\delta) |}.
		\end{equation}
		Fix $m\in \Lambda$.
If  $n\in\Lambda_m$ we have $A_n^\delta \subset A_m^\delta$, and thus we estimate $|A_n^\delta|\leq
|A_m^\delta|$ and also
$|\conv(J_m^\delta,A_m^\delta)| \lesssim_\gamma
|\conv(J_n^\delta,A_n^\delta)|$ since already in the $\sigma$-algebra $\mathscr
A_{m}^\delta$,
we have at least $ k_\delta$ atoms between $J_m^\delta$ and $A_m^\delta$ (recall
$A_m^\delta\subset (H^\delta)^c$, $m>n(E)$,
 and $J_n^\delta\subset L^\delta$) 
 and $k_\delta$-regularity in direction $\delta$ gives this inequality. Therefore, we can
estimate the  sum in \eqref{eq:deltamax} from above by
\begin{equation}\label{eq:toshow_double}
	\sum_{m\in\Lambda}
	\frac{|A_m^\delta|}{|\conv(J_m^\delta,A_m^\delta)|}
	\sum_{ n\in\Lambda_m}\frac{ |J_n^\delta|  }{ |
		\conv(J_n^\delta, B_n^\delta)| }.
	\end{equation}
	Fix $m\in\Lambda$, fix a B-spline support $\Delta$ of order $k_\delta$ in the
	$\sigma$-algebra $\mathscr A_m^\delta$ between
	$A_m^\delta$ and $L^\delta$.
	For all $n\in\Lambda_m$,
	$\Delta$ is a B-spline support of order $k_\delta$ in the $\sigma$-algebra $\mathscr
	A_n^\delta$ as well since the number of atoms of $\mathscr A_n^\delta$
	between $A_n^\delta$ and $B_n^\delta$ is constant and $A_n^\delta\subset
	A_m^\delta$ and also the sets $B_n^\delta$ are decreasing.
	If $n\in \Lambda_m$, we know
	that between $A_n^\delta$ and $J_n^\delta$ we have $|r_\delta|$
	atoms of $\mathscr A_n^\delta$ and this means $|d_n^\delta
	(D, J_n^\delta)| \leq |r_\delta|$ for any atom $D\subset \Delta$ in
	$\mathscr A_n^\delta$, $n\in\Lambda_m$ (using the notation $d_n^\delta =
	d_{\mathscr A_n^\delta}$).
	This implies by $k_\delta$-regularity of the $\sigma$-algebra $\mathscr
	A_n^\delta$
\begin{equation}\label{eq:reg}
	\frac{\gamma^{-|r_\delta|}|\Delta|}{k_\delta} \leq  |J_{n}^\delta| \leq
	\gamma^{|r_\delta|}
	|\Delta|.
\end{equation}
Now we consider two subcases relating the values of $r$ and $s$ for the
analysis of the inner sum in \eqref{eq:toshow_double} for fixed $m\in\Lambda$.
We remark that by definition of $r$ and $s$ and the location of $A_n,B_n,J_n$, 
the sign of $r_\delta$ is the same as the sign of $s_\delta$.

	\textsc{Case 2a: }$|r_\delta|<|s_\delta|$:
	In this case, the set $J_n^\delta$ is strictly between
	$A_n^\delta$ and $B_n^\delta$ for  all $n\in \Lambda_m$. 
	This implies that
	since $d_n^\delta(A_n^\delta,J_n^\delta) = r_\delta$ and
	$d_n^\delta(J_n^\delta,B_n^\delta)= s_\delta-r_\delta$ are both constant
	for $n\in \Lambda_m$,
	that the sets $J_n^\delta$ have to coincide for all $n\in \Lambda_m$. 
	Set $C_n = B_n^{1}\times \cdots \times B_n^{\delta-1}
\times A_n^{\delta}\times B_n^{\delta+1}\times \cdots \times B_n^{d}$ for
$n\in \Lambda_m$ and apply Lemma~\ref{lem:comb} to deduce that  the cardinality
of $\Lambda_m$ is $\lesssim_{\gamma,\beta} \sum_{j\neq \delta} (1+|s_j - r_j|)$.
This gives the estimate
\[
	\sum_{n\in\Lambda_m }\frac{ |J_n^\delta|}
	{|\conv(J_n^\delta, B_n^\delta)| } \lesssim_{\gamma,\beta}  \sum_{j\neq \delta} (1 + | s_j  -r_j|).
\]

	\textsc{Case 2b: }$|r_\delta|\geq|s_\delta|$:
	Here, $B_n^\delta$ is between $A_n^\delta$ and
	$J_n^\delta$ for all $n\in\Lambda_m$.
Let $x$ be the point in $B_m^\delta$ closest to $A_m^\delta$. For
$n\in\Lambda_m$, let $I_n\subset J_n^\delta$ be an interval such that
$|I_n|=|J_n^\delta|/2$ and $\dist (x,I_n) \geq |J_n^\delta|/2$.
Then 
\[
\sum_{n\in\Lambda_m}\frac{ |J_n^\delta|  }{ |
		\conv(J_n^\delta, B_n^\delta)| }
	=	
		2 \sum_{n\in\Lambda_m}
		\int_{I_n}
		\frac{1}{|\conv(J_n^\delta,B_n^\delta)|} \dif t.
\]
 Note that
$x$ is also an endpoint of $B_n^\delta$ for all
 $n\in\Lambda_m$ and thus, if  $t\in I_n$,
 \begin{equation}\label{eq:x_minus_t}
	|J_n^\delta| /2\leq |x-t| \leq |\conv(J_n^\delta,B_n^\delta)|,\qquad
	n\in\Lambda_m.
\end{equation}
Let $\Omega \subset \Lambda_m$ be so that for $i,j\in\Omega$
with $i<j$ we have $J_i^\delta \supseteq J_j^\delta$. Then we apply
Lemma~\ref{lem:comb} with $C_n = B_n^{1}\times \cdots \times B_n^{\delta-1}
\times A_n^{\delta}\times B_n^{\delta+1}\times \cdots \times B_n^{d}$ for
$n\in \Omega$ to deduce that the cardinality of $\Omega$ is 
$\lesssim_{\gamma,\beta} \sum_{j\neq \delta} (1+|s_j - r_j|)$.
Therefore we estimate further 
\[
	\sum_{n\in\Lambda_m} \int_{I_n}
	\frac{1}{|\conv(J_n^\delta,B_n^\delta)|} \dif t \lesssim_{\gamma,\beta}
	\Big(\sum_{j\neq \delta} (1+|s_j - r_j|) \Big)
	\int_{\cup_{n} I_n}
	\frac{1}{|x-t|}\dif t,
\]
which, by inequalites \eqref{eq:x_minus_t} and \eqref{eq:reg}, is smaller than
\[
	\Big(\sum_{j\neq \delta} (1+|s_j - r_j|)\Big)
	\int_{\gamma^{-|r_\delta|}
|\Delta|/(2k_\delta)}^{K\gamma^{|r_\delta|} |\Delta|}
\frac{\dif u}{u}\lesssim_{\gamma,\beta}
	\Big( \sum_{j\neq \delta}(1+ |s_j - r_j|)\Big) \cdot |r_\delta|, 
\]
for some absolute constant $K$.

Thus, we come back to \eqref{eq:toshow_double} and combine  the results of
subcases \textsc{2a} and \textsc{2b} to
obtain
\[
	\sum_{m\in\Lambda}
	\frac{|A_m^\delta|}{|\conv(J_m^\delta,A_m^\delta)|}
	\sum_{ n\in\Lambda_m}\frac{ |J_n^\delta|  }{ |
		\conv(J_n^\delta, B_n^\delta)| }
		\lesssim_{\gamma,\beta} \Big( \sum_{j\neq \delta} (1+|s_j - r_j|)\Big)
		|r_\delta|
	\sum_{m\in\Lambda}
	\frac{|A_m^\delta|}{|\conv(J_m^\delta,A_m^\delta)|}.
\]
The next thing is to 
estimate the latter sum in terms of $r$ and $s$.
This can be done as follows. 

Denote by $x$ the endpoint of $L^\delta$ that is closest
to the sets $A_m^\delta$, $m\in\Lambda$.
Since for all $m\in\Lambda$, $J_m^\delta$ is a subset of $L^\delta$ and
$A_m^\delta$ is a subset
of $(H^\delta)^c$, the distance between $x$ and $t$ is greater than
$c |E^\delta|$ for all $t\in A_m^\delta$ and some constant $c$ depending only on $k,\gamma$. Using 
$|d_m^\delta(A_m^\delta, J_m^\delta)| = |r_\delta|$ for all $m\in\Lambda$ and $k_\delta$-regularity
of $(\mathscr A_n^\delta)$, we obtain

\[
c|E^\delta|\leq |x-t| \leq  |\conv(J_m^\delta, A_m^\delta)| \leq C
\gamma^{|r_\delta|} |E^\delta|,
\qquad t\in A_m^\delta
\]
for some absolute constant $C$.
This implies, since the maximal sets $A_{m}^\delta$, $m\in\Lambda$, are disjoint,
\[
	\sum_{m\in \Lambda} \frac{|A_m^\delta|}{|\conv
		(J_m^\delta, A_m^\delta)|} \leq
		\sum_{m\in\Lambda} \int_{A_m^\delta} \frac{1}{|x-t|} \dif t
\leq \int_{c|E^\delta|}^{C\gamma^{|r_\delta|} |E^\delta|}
	\frac{1}{u}\dif u \lesssim_{\gamma} |r_\delta|.
\]
Thus
\[	 
	\sum_{m\in\Lambda}
	\frac{|A_m^\delta|}{|\conv(J_m^\delta,A_m^\delta)|}
	\sum_{ n\in\Lambda_m}\frac{ |J_n^\delta|  }{ |
		\conv(J_n^\delta, B_n^\delta)| }
		\lesssim_{\gamma,\beta}
			 \Big( \sum_{j\neq \delta}(1+ |s_j - r_j|)\Big)
			 |r_\delta|^2.
\]
Therefore, coming back to the very beginning of \textsc{Case 2} and inserting this
estimate into the last line of~\eqref{eq:case2}, 
\begin{align*}
	\Big\| \sum_{n\in \Gamma_2} |\langle b,f_n\rangle| \cdot
	|f_n|\Big\|_{L^1(H^c)} &
	\lesssim_{\gamma,\beta}  \sum_{r,s\in\mathbb Z^d} \sum_{\delta=1}^d
	\Big(\sum_{j\neq
	\delta} (1+|s_j - r_j|)\Big) \cdot |r_\delta|^2 q^{|s|_1 + |r|_1} \int_E
	|b(t)|\dif t \\
	&\lesssim \int_E |b(t)|\dif t.
\end{align*} 
Combining now \textsc{Case 1} and \textsc{Case 2} and setting $b=g_j$, we have proved
\eqref{eq:crucialtoshow}.

This completes the proof of our main Theorem~\ref{thm:main}. 

\subsection*{Acknowledgments}
	The author is supported by the Austrian Science Fund FWF, project P32342.

\bibliographystyle{plain}
\bibliography{convergence}

\begin{thebibliography}{10}

\bibitem{BrudnyiGanzburg1973}
Ju.~A. Brudny{\u\i} and M.~I. Ganzburg.
\newblock A certain extremal problem for polynomials in {$n$} variables.
\newblock {\em Izv. Akad. Nauk SSSR Ser. Mat.}, 37:344--355, 1973.

\bibitem{deBoor2012}
C.~de~Boor.
\newblock On the (bi)infinite case of {S}hadrin's theorem concerning the
  {$L_\infty$}-boundedness of the {$L_2$}-spline projector.
\newblock {\em Proc. Steklov Inst. Math.}, 277(suppl. 1):S73--S78, 2012.

\bibitem{deBoorJiaPinkus}
C.~de~Boor, Rong~Qing Jia, and A.~Pinkus.
\newblock Structure of invertible (bi)infinite totally positive matrices.
\newblock {\em Linear Algebra Appl.}, 47:41--55, 1982.

\bibitem{DeVoreLorentz1993}
R.~A. DeVore and G.~G. Lorentz.
\newblock {\em Constructive approximation}, volume 303 of {\em Grundlehren der
  Mathematischen Wissenschaften [Fundamental Principles of Mathematical
  Sciences]}.
\newblock Springer-Verlag, Berlin, 1993.

\bibitem{Ga2001}
M.~I. Ganzburg.
\newblock Polynomial inequalities on measurable sets and their applications.
\newblock {\em Constructive Approximation}, 17(2):275, 2001.

\bibitem{GevorkyanKamont2004}
G.~G. Gevorkyan and A.~Kamont.
\newblock Unconditionality of general {F}ranklin systems in {$L^p[0,1],
  1<p<\infty$}.
\newblock {\em Studia Math.}, 164(2):161--204, 2004.

\bibitem{KamontKeryan2021}
A~Kamont and K.~Keryan.
\newblock Spline orthonormal systems and {C}alder\'{o}n-{Z}ygmund operators.
\newblock {\em unpublished manuscript}, 2021.

\bibitem{Karlin1968}
S.~Karlin.
\newblock {\em Total positivity. {V}ol. {I}}.
\newblock Stanford University Press, Stanford, Calif, 1968.

\bibitem{MuellerPassenbrunner2020}
P.~F.~X. M\"{u}ller and M.~Passenbrunner.
\newblock Almost everywhere convergence of spline sequences.
\newblock {\em Israel J. Math.}, 240(1):149--177, 2020.

\bibitem{Neveu1975}
J.~Neveu.
\newblock {\em Discrete-parameter martingales}.
\newblock North-Holland Publishing Co., Amsterdam-Oxford; American Elsevier
  Publishing Co., Inc., New York, revised edition, 1975.
\newblock Translated from the French by T. P. Speed, North-Holland Mathematical
  Library, Vol. 10.

\bibitem{Passenbrunner2021}
M.~Passenbrunner.
\newblock Martingale convergence theorems for tensor splines.
\newblock {\em arXiv:2101.08971}.

\bibitem{Passenbrunner2014}
M.~Passenbrunner.
\newblock Unconditionality of orthogonal spline systems in {$L^p$}.
\newblock {\em Studia Math.}, 222(1):51--86, 2014.

\bibitem{Passenbrunner2020}
M.~Passenbrunner.
\newblock Spline characterizations of the {R}adon-{N}ikod\'{y}m property.
\newblock {\em Proc. Amer. Math. Soc.}, 148(2):811--824, 2020.

\bibitem{PassenbrunnerShadrin2014}
M.~Passenbrunner and A.~Shadrin.
\newblock On almost everywhere convergence of orthogonal spline projections
  with arbitrary knots.
\newblock {\em J. Approx. Theory}, 180:77--89, 2014.

\bibitem{Pinkus2010}
A.~Pinkus.
\newblock {\em Totally positive matrices}, volume 181 of {\em Cambridge Tracts
  in Mathematics}.
\newblock Cambridge University Press, Cambridge, 2010.

\bibitem{Pisier2016}
G.~Pisier.
\newblock {\em Martingales in {B}anach spaces}, volume 155 of {\em Cambridge
  Studies in Advanced Mathematics}.
\newblock Cambridge University Press, Cambridge, 2016.

\bibitem{Schumaker2007}
L.~L. Schumaker.
\newblock {\em Spline functions: basic theory}.
\newblock Cambridge Mathematical Library. Cambridge University Press,
  Cambridge, third edition, 2007.

\bibitem{Shadrin2001}
A.~Shadrin.
\newblock The {$L_\infty$}-norm of the {$L_2$}-spline projector is bounded
  independently of the knot sequence: a proof of de {B}oor's conjecture.
\newblock {\em Acta Math.}, 187(1):59--137, 2001.

\end{thebibliography}

\end{document}